\documentclass[onefignum,onetabnum,final]{siamart171218}

\usepackage{enumerate}
\usepackage{braket,amsfonts,amsopn}
\usepackage{graphicx,epstopdf} 
\usepackage{yhmath}
\usepackage{mathrsfs}
\usepackage{mathtools}
\usepackage{float}
\usepackage{subfigure}
\usepackage{makecell}
\usepackage{multirow}

\usepackage{bbm}

\newcommand{\avg}[1]{\left\langle #1 \right\rangle}
\newcommand{\bx}{\mathbf{x}}
\newcommand{\bu}{\mathbf{u}}
\newcommand{\bM}{\mathbf{M}}
\newcommand{\bp}{\mathbf{p}}
\newcommand{\bv}{\mathbf{v}}

\newcommand{\bi}{\mathbf{i}}

\newcommand{\dif}{\mathrm{d}}
\newcommand{\mC}{\mathcal{C}}
\newcommand{\mS}{\mathcal{S}}
\newcommand{\cT}{\mathcal{T}}

\makeatletter
\newcommand{\fdsy@scale}{1.0}
\newcommand\fdsy@mweight@normal{Book}%
\newcommand\fdsy@mweight@small{Regular}%
\newcommand\fdsy@bweight@normal{Medium}%
\newcommand\fdsy@bweight@small{Bold}%
\DeclareFontFamily{U}{FdSymbolF}{}
\DeclareFontShape{U}{FdSymbolF}{m}{n}{
    <-7.1> s * [\fdsy@scale] FdSymbolF-\fdsy@mweight@small
    <7.1-> s * [\fdsy@scale] FdSymbolF-\fdsy@mweight@normal
}{}
\DeclareFontShape{U}{FdSymbolF}{b}{n}{
    <-7.1> s * [\fdsy@scale] FdSymbolF-\fdsy@bweight@small
    <7.1-> s * [\fdsy@scale] FdSymbolF-\fdsy@bweight@normal
}{}
\makeatother
\DeclareSymbolFont{delimiters}{U}{FdSymbolF}{m}{n}
\SetSymbolFont{delimiters}{bold}{U}{FdSymbolF}{b}{n}
\DeclareMathDelimiter{\lAngle}{\mathopen}{delimiters}{"92}{delimiters}{"92}
\DeclareMathDelimiter{\rAngle}{\mathclose}{delimiters}{"98}{delimiters}{"92}
\newcommand{\bbrk}[1]{\left\lAngle #1 \right\rAngle}

\newcommand{\revise}{}


\usepackage{lipsum}
\usepackage{amsfonts}
\usepackage{graphicx}
\usepackage{epstopdf}
\usepackage{algorithmic}
\ifpdf
  \DeclareGraphicsExtensions{.eps,.pdf,.png,.jpg}
\else
  \DeclareGraphicsExtensions{.eps}
\fi


\newsiamremark{remark}{Remark}
\newsiamremark{hypothesis}{Hypothesis}
\crefname{hypothesis}{Hypothesis}{Hypotheses}
\newsiamthm{claim}{Claim}

\headers{
  High-order SLFV Methods for the Advection Equation}{Y. Sun, K. Liang, Y. Zhu, 
  Z. Lin, and Q. Zhang}

\title{Fourth- and Higher-Order Semi-Lagrangian Finite Volume Methods 
  for the Two-dimensional Advection Equation
  on Arbitrarily Complex Domains 
  \thanks{
    Yunxia Sun and Kaiyi Liang are co-first authors 
    with equal contributions.
    \funding{Q. Zhang was supported by the grant 12272346 
      from the National Natural Science Foundation of China.
      Z. Lin was supported by grants 12071429 and 12090020 
      from the National Natural Science Foundation of China.
} 
} 
}

\author{
  Yunxia Sun \thanks{College of Mathematics and System Science,
      Xinjiang University, Urumqi, Xinjiang Prov., 830046 China
      (\email{syxmath@163.com}, \email{qinghai@zju.edu.cn}).} 
  \and Kaiyi Liang \thanks{School of Mathematical Sciences,
      Zhejiang University, Hangzhou, Zhejiang Prov., 310058, China 
    (\email{kyliang@zju.edu.cn}, \email{yukezhu0323@126.com}, 
    \email{linzhi80@zju.edu.cn}, \email{qinghai@zju.edu.cn}).}
  \and Yuke Zhu \footnotemark[3]
  \and Zhi Lin \footnotemark[3]
  \and Qinghai Zhang \footnotemark[2] \footnotemark[3] \thanks{
    Corresponding author. (\email{qinghai@zju.edu.cn})
    Institute of Fundamental and Transdisciplinary Research,
    Zhejiang University, Hangzhou, Zhejiang Prov., 310058, China.
      }
}

\usepackage{amsopn}

\makeatletter
\newcommand*{\addFileDependency}[1]{
  \typeout{(#1)}
  \@addtofilelist{#1}
  \IfFileExists{#1}{}{\typeout{No file #1.}}
}
\makeatother


\ifpdf
\hypersetup{
  pdftitle={Fourth- and Higher-Order Semi-Lagrangian Finite Volume Methods 
  for the Two-dimensional Advection Equation
  on Arbitrarily Complex Domains}, 
  pdfauthor={Y. Sun, K. Liang, Y. Zhu, Z. Lin, 
    and Q. Zhang}
}
\fi




\begin{document}

\maketitle




\begin{abstract}
  To numerically solve the two-dimensional advection equation, 
   we propose a family of fourth- and higher-order
   semi-Lagrangian finite volume (SLFV) methods
   that feature
   (1) fourth-, sixth-, and eighth-order convergence rates, 
   (2) applicability to
   both regular and irregular domains
   with arbitrarily complex topology and geometry, 
   (3) ease of handling
   both zero and nonzero source terms, 
   and (4) the same algorithmic steps
   for both periodic and incoming penetration conditions.
  Test results confirm the analysis
  and demonstrate the accuracy, flexibility, robustness,
  and excellent conditioning of the proposed SLFV method.
\end{abstract}

\begin{keywords}
  the advection equation,
  semi-Lagrangian methods,
  finite volume methods,
  Yin sets, 
  incoming penetration velocity boundary conditions.
\end{keywords}

\begin{AMS}
  35F16,
  65M08,
  65M25
\end{AMS}

\section{Introduction}
\label{sec:introduction}
The advection equation is 
 an important partial differential equation (PDE) 
 utilized across various scientific fields
 such as meteorology and fluid mixing and transport.  
This paper focuses on the two-dimensional advection equation
\begin{subequations}
  \label{eq:advection_control_equation}
  \begin{align}
    \frac{\partial \rho(\bx, t)}{\partial t} 
        + \bu(\bx, t) \cdot \nabla \rho(\bx, t) 
        &=S(\bx, t),
    &&\text{in}\ \Omega \times [0, T], 
    \label{eq:advection_control_equation_a}\\
    \label{eq:advection_control_equation_init}
    \rho(\bx, 0) &= \rho_{\text{init}}(\bx),  
    &&\text{in}\ \overline{\Omega}, \\  \label{eq:advection_control_equation_bc}
    \rho(\bx, t) &= \rho_{\text{bc}}(\bx, t),  
    &&\text{on}\ \Gamma^-, 
  \end{align}  
\end{subequations}
where $\bx$ is the spatial position, 
$t$ the time, 
$\rho$ the unknown scalar,  
$\Omega\subset \mathbb{R}^2$ a bounded domain (i.e., a bounded Yin
set; see \Cref{def:YinSets}),
$\partial \Omega$ the domain boundary, 
$\overline{\Omega}$ the closure of $\Omega$, 
$\rho_{\text{init}}$ the initial condition, 
$\rho_{\text{bc}}$ the boundary condition, 
$\bu: \overline{\Omega}\times[0,T]\to \mathbb{R}^2$
 a velocity field given \emph{a priori}, 
and $S(\bx, t)$ the source term
that is continuous in $t$
and Lipschitz continuous in $\bx$. 
If $\rho$ is periodic on $\Omega$,
 \cref{eq:advection_control_equation_bc}
 should be removed from \cref{eq:advection_control_equation}; 
 otherwise, to ensure the well-posedness of
 \cref{eq:advection_control_equation}, 
 $\rho_{\text{bc}}$ 
 should be specified on a subset
 $\Gamma^-\subset \partial \Omega \times (0, T]$
 that depends on the sign of $\mathbf{u}$ along $\partial\Omega$, 
\begin{equation}
  \label{eq:incomingSet}
  \Gamma^- \coloneqq\bigl \{(\bx,t): \bx\in \partial \Omega,\, t\in (0,T]; \, 
  \mathbf{u}(\bx,t) \cdot \mathbf{n}(\bx)<0\bigr\},  
\end{equation} 
 where $\mathbf{n}(\bx)$ is the unit outward normal vector
 of $\partial\Omega$.
If $\Gamma^-\ne\emptyset$,
 we call $\rho_{\text{bc}}: \Gamma^- \to \mathbb{R}$
 an \emph{incoming penetration condition}
 and $\Omega$ an \emph{incoming penetration domain}
 (or simply a \emph{penetration domain}). 
If $\Gamma^-=\emptyset$,
 \cref{eq:advection_control_equation_bc}
 drops out from \cref{eq:advection_control_equation}
 without affecting the well-posedness;
 see the last paragraph of \Cref{sec:flow-maps}
 for such an example.
 
Numerical algorithms for solving the advection equation 
 can be categorized into Eulerian, Lagrangian, 
 and semi-Lagrangian methods.  
These methods can be conceptually unified via
 \emph{the Lagrangian form of the advection equation}, 
\begin{equation}
  \label{eq:lagrangian_derivative}
  \frac{\dif \rho}{\dif t} = S(\bx, t),
\end{equation}
 which is equivalent to \cref{eq:advection_control_equation_a}
 since the material derivative is defined
 as $\frac{\dif}{\dif t} \coloneqq \frac{\partial}{\partial t} 
    + \bu \cdot \nabla $.
In the extended phase space,
 integrate \cref{eq:lagrangian_derivative}
 along a curve
 $\gamma:[s_0,s_1] \to \Omega \times [0,T]$ 
 and we have
 $\rho(\bx_1, t_1) - \rho(\bx_0, t_0) 
    = \int_{\gamma} \dif \rho 
    = \int_{\gamma}\left(
        \nabla \rho, \frac{\partial \rho}{\partial t}
        \right) \cdot (\dif \bx, \dif t)$, i.e., 
\begin{equation}
  \label{eq:Eulerian_Lagrangian}
  \rho(\bx_1, t_1) 
    = \rho(\bx_0, t_0) 
    + \int_{\gamma} \nabla \rho \cdot (\dif \bx - \bu \dif t) 
    + \int_{\gamma} S \ \dif t, 
\end{equation}
which yields a pure Eulerian method
if $\gamma$ is chosen to be time independent
and a pure Lagrangian method
 if $\gamma$ is an integral curve of \cref{eq:ODE-flow-map}
 or the pathline of a Lagrangian particle; see \Cref{def:pathline}.  
Another choice of $\gamma$ as a combination 
 of stationary curves and particle pathlines
 leads to the so-called \emph{semi-Lagrangian methods}
 \cite{staniforth1991semi, smolarkiewicz1992class}. 

The time step size of pure Eulerian methods is constrained by
 the Courant-Friedrichs-Lewy (CFL) stability condition. 
On the other hand, the time step size of a pure Lagrangian method
 can be arbitrarily large, 
 but particle distributions may evolve to be highly non-uniform,
 which deteriorates accuracy.  
Semi-Lagrangian methods resolve disadvantages 
 of the two pure methods
 by employing a regular Cartesian grid for uniform spatial distribution 
 and tracking particles along pathlines 
 to relax the CFL condition; 
 they have proven to be 
 effective in various applications such as 
 weather forecasting \cite{robert1981stable, staniforth1991semi, 
   mohebalhojeh2009diabatic}
 and plasma simulations \cite{begue1999two, sonnendrucker1999semi,
    besse2003semi, huot2003instability, carrillo2007nonoscillatory,
    qiu2010conservative}. 

In semi-Lagrangian methods, 
 the unknown may be point values for finite difference schemes 
 \cite{qiu2010conservative, qiu2011conservative, 
   xiong2019conservative, chen2021adaptive, li2023high} 
 or cell-averaged values for finite volume schemes 
 \cite{crouseilles2010conservative, huang2012eulerian, 
   benkhaldoun2015family, abreu2017new}. 
In both cases, 
 the solution often needs to be determined
 from nearby values
 via a reconstruction technique such as 
 spline interpolation \cite{crouseilles2007hermite}, 
 piecewise parabolic method (PPM) \cite{colella1984piecewise}, 
 cubic interpolation pseudo-particle method (CIP) 
 \cite{takewaki1985cubic},
 and essentially non-oscillatory (ENO)/ weighted ENO (WENO) reconstruction
 \cite{jiang1996efficient, shu2009high, shu2020essentially}.

Many semi-Lagrangian methods 
 were originally designed in one dimension; 
 see, e.g., the finite difference scheme in \cite{qiu2011conservative}
 and the finite volume scheme in \cite{huang2012eulerian}.
Multi-dimensional problems are often decomposed
 into a series of one-dimensional subproblems
 via an operator splitting technique such as
 the second-order Strang splitting \cite{qiu2011positivity} 
 and the fourth-order splitting in \cite{nakao2022eulerian}. 

Recently, 
 non-splitting semi-Lagrangian schemes are becoming increasingly popular.
For example,
 the consideration of multi-dimensional flux differences
 leads to a mass conservative semi-Lagrangian finite difference WENO scheme 
 \cite{xiong2019conservative}, 
 which imposes an additional restriction 
 on the time step size for numerical stability. 
Another conservative WENO scheme \cite{zheng2022fourth}
 involves constructing a new WENO reconstruction operator 
 and computing the integral over cell preimages. 
A machine learning-assisted method \cite{chen2023learned}
 accelerates traditional schemes 
 by learning the semi-Lagrangian discretization from data,
 but is unfortunately subject to the stability constraint 
 of the CFL number being less than 2.

Despite their tremendous successes,
 current semi-Lagrangian methods still have a number of limits.
First, 
 most methods are developed for regular domains
 and it is not clear how to extend them to irregular domains 
 with complex geometry. 
Second,
 most methods assume a periodic condition on $\rho$
 and it is not clear how to generalize these methods to
 penetration domains.
Third,
 although it encourages reuse of simple algorithms, 
 the operator splitting technique may result in a significant increase
 of the number of subproblems in multiple dimensions. 
For a targeting order of accuracy,
 one may have to re-derive the splitting process, 
 which may impose additional constraints
 on the time step size.

The above discussions lead to the following questions:
\begin{enumerate}[({Q}-1)]
\item \label{Q-1}
  Can we develop fourth- and higher-order semi-Lagrangian methods 
  that effectively handle both regular and irregular domains 
  with arbitrarily complex topology and geometry?   
\item \label{Q-2}
  Can the new semi-Lagrangian method
  apply to both periodic domains and penetration domains?
\item \label{Q-3} 
  Is the new semi-Lagrangian method directly applicable
  to the case of a nonzero source term? 
\item \label{Q-4}
  While semi-Lagrangian methods are free of the CFL constraints, 
  can we prove the fourth- and higher-order convergence rates 
  of the new method?
\end{enumerate}

In this paper, 
 we give positive answers to all above questions 
 by proposing a family of fourth- and higher-order
 semi-Lagrangian finite volume (SLFV) methods. 

The rest of this paper is organized as follows.
We set the theoretical context for this work
 in \Cref{sec:preliminaries},
 where we prove \cref{thm:partitionOfYinSets}
 as the key result for handling irregular domains. 
In \Cref{sec:algorithm}, 
 we describe our answers to (Q-1,2,3)
 by elaborating on the SLFV method
 and its main components. 
The convergence rates of SLFV are proved in \Cref{sec:analysis},
 answering (Q-4). 
In \Cref{sec:numerical-tests},
 we perform various tests on SLFV
 to demonstrate its high-order accuracy,
 its capability of handling irregular domains 
 with arbitrarily complex topology and geometry,
 and its generality for periodic/penetration domains
 and zero/nonzero source terms.
Finally, 
 we conclude this work in \Cref{sec:conclusion-1}
 with several research prospects.

\section{The action of flow maps upon Yin sets}
\label{sec:preliminaries}
Continua of arbitrarily complex topology and geometry
 are modeled by Yin sets \cite{zhang2020boolean} 
 and the action of flow maps upon Yin sets
 furnishes the theoretical context of this paper. 

\subsection{Flow maps}
\label{sec:flow-maps}

The ordinary differential equation (ODE)
\begin{equation}
  \label{eq:ODE-flow-map}
  \frac{\dif \bx}{\dif t} = \bu(\bx, t)
\end{equation}
has a unique solution for any given initial time $t_0$ 
and position $\mathbf{p}(t_0)$ 
if $\bu(\bx, t)$ is continuous in time 
and Lipschitz continuous in space. 
This uniqueness gives rise to a \emph{flow map} 
$\phi:\mathbb{R}^{2} \times \mathbb{R} \times \mathbb{R}
    \to \mathbb{R}^{2}$, 
which takes the initial time $t_0$, 
the initial position $\mathbf{p}(t_0)$ 
of a Lagrangian particle $\mathbf{p}$, 
and the time increment $\pm k$,  
and returns the position of $\mathbf{p}$ at $t_0 \pm k$:
\begin{equation}
  \label{eq:flow-maps}
  \phi_{t_0}^{\pm k}(\mathbf{p}) 
  \coloneqq \mathbf{p}(t_0 \pm k)
  = \mathbf{p}(t_0) 
  + \int_{t_0}^{t_0 \pm k} \bu(\mathbf{p}(t),t)\ \dif t.
\end{equation}

The set $\{\phi_{t_0}^{\sigma}: \sigma\in \mathbb{R}\}$
forms a one-parameter group of diffeomorphisms.

\begin{definition}[Pathline]
  \label{def:pathline}
  The \emph{pathline} of a Lagrangian particle $\mathbf{p}$ 
  in a time interval $(t_0,t_0 \pm k)$
  is the curve $\Phi_{t_0}^{\pm k}: (0,k)\to \mathbb{R}^2$ given by 
  \begin{equation}
    \label{eq:pathline}
    \Phi_{t_0}^{\pm k}(\bp) \coloneqq \{ \phi_{t_0}^{\pm \tau}(\bp) : \tau \in (0, k)\}.
  \end{equation}
\end{definition}

The flow map naturally generalizes
 to a point set $\mathcal{P}$. 
As a homeomorphism, the flow map preserves
key topological features of $\mathcal{P}$.

In the particular case of a \emph{no-penetration domain}, 
i.e.,
  $\forall \mathbf{x}\in\partial \Omega,\ 
  \mathbf{u}(\mathbf{x}) \cdot \mathbf{n}(\mathbf{x}) = 0$, 
 the flow map sends points in $\Omega$
 to points in $\Omega$
 and thus a no-penetration domain $\Omega$ contains
 the pathline of any particle $\bp$
 that is initially inside $\Omega$.

\subsection{Yin sets and their partitions}
\label{sec:Yin-sets}
Denote by $\mathcal{X}$ a topological space 
and $\mathcal{P} \subseteq \mathcal{X}$ a subset.  
The complement, closure and interior of $\mathcal{P}$ 
are denoted as $\mathcal{P}'$, $\overline{\mathcal{P}}$ 
and $\mathcal{P}^{\circ}$, respectively. 
The \emph{exterior} of $\mathcal{P}$ is defined as 
$\mathcal{P}^{\perp} \coloneqq (\mathcal{P}')^{\circ}$. 
$\mathcal{P} \subseteq \mathcal{X}$ is \emph{regular open} 
if $\mathcal{P} = \left(\mathcal{P}^{\perp}\right)^{\perp}$. 
This regularity condition captures
the physical meaningfulness of continua
by precluding low-dimensional features
such as isolated gaps and points.

A set $\mathcal{P} \subseteq \mathbb{R}^{2}$ is \emph{semianalytic} 
if there exist a finite number of analytic functions 
$g_i: \mathbb{R}^{2} \to \mathbb{R}$ such that 
$\mathcal{P}$ is in the universe of a finite Boolean algebra 
formed by the sets 
$\{\bx \in \mathbb{R}^{2} : g_i(\bx) \ge 0\}$.  
Intuitively, semianalytic sets have piecewise smooth boundary curves.

\begin{definition}[Yin space~\cite{zhang2020boolean}]
  \label{def:YinSets}
  A \emph{Yin set} $\mathcal{Y} \subseteq \mathbb{R}^{\text{2}}$ 
  is a regular open semianalytic set whose boundary is bounded. 
  The class of all such Yin sets form the \emph{Yin space} 
  $\mathbb{Y}$.
\end{definition}

The boundedness of the boundary of Yin sets leads to  

\begin{theorem}[Zhang and Li~\cite{zhang2020boolean}]
  \label{thm:BooleanAlgebra}
  $\left(\mathbb{Y}, \cup^{\perp \perp}, \cap, ^{\perp}, 
    \emptyset, \mathbb{R}^2\right)$ is a Boolean algebra, 
  where the regularized union $\cup^{\perp \perp}$
  is given by $\mathcal{P}\cup^{\perp \perp}\mathcal{Q}
    \coloneqq (\mathcal{P} \cup \mathcal{Q})^{\perp \perp}$ 
  for all $\mathcal{P},\ \mathcal{Q}\in \mathbb{Y}$.
\end{theorem}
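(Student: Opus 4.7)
The plan is to leverage the classical fact that the regular open subsets of any topological space form a complete Boolean algebra under $\cup^{\perp\perp}$, $\cap$, and $^{\perp}$, and then to show that the two additional conditions in \Cref{def:YinSets} -- semianalyticity and boundedness of the boundary -- are preserved by these three operations, so that $\mathbb{Y}$ inherits the structure as a subalgebra containing $\emptyset$ and $\mathbb{R}^{2}$.

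First I would recall the standard identities for regular open sets: (i) $\mathcal{P}^{\perp\perp}=\mathcal{P}$ whenever $\mathcal{P}$ is regular open; (ii) $\mathcal{P}\cap\mathcal{Q}$ is regular open whenever $\mathcal{P}$ and $\mathcal{Q}$ are; (iii) $\mathcal{P}^{\perp}$ is regular open for every $\mathcal{P}$; and (iv) $\mathcal{P}\cup^{\perp\perp}\mathcal{Q}$ is, by construction, the smallest regular open set containing $\mathcal{P}\cup\mathcal{Q}$. Commutativity and associativity of $\cap$ and $\cup^{\perp\perp}$, distributivity in both directions, the identity laws $\mathcal{P}\cap\mathbb{R}^{2}=\mathcal{P}$ and $\mathcal{P}\cup^{\perp\perp}\emptyset=\mathcal{P}$, the complementation laws $\mathcal{P}\cap\mathcal{P}^{\perp}=\emptyset$ and $\mathcal{P}\cup^{\perp\perp}\mathcal{P}^{\perp}=\mathbb{R}^{2}$, and the De Morgan dualities $(\mathcal{P}\cup^{\perp\perp}\mathcal{Q})^{\perp}=\mathcal{P}^{\perp}\cap\mathcal{Q}^{\perp}$ and $(\mathcal{P}\cap\mathcal{Q})^{\perp}=\mathcal{P}^{\perp}\cup^{\perp\perp}\mathcal{Q}^{\perp}$ then follow by routine manipulation at the level of regular open sets.

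Next I would verify preservation of semianalyticity. By the very definition the semianalytic sets are closed under finite Boolean combinations, and Lojasiewicz's theorem guarantees that both the interior and the closure of a semianalytic set remain semianalytic. Hence if $\mathcal{P},\mathcal{Q}$ are semianalytic, so are $\mathcal{P}\cap\mathcal{Q}$, $\mathcal{P}^{\perp}=(\mathcal{P}')^{\circ}$, and $(\mathcal{P}\cup\mathcal{Q})^{\perp\perp}$. For the boundary-boundedness condition, the key topological inclusions are $\partial\mathcal{P}^{\perp}\subseteq\partial\mathcal{P}$ and $\partial(\mathcal{P}\cap\mathcal{Q}),\partial(\mathcal{P}\cup\mathcal{Q})\subseteq\partial\mathcal{P}\cup\partial\mathcal{Q}$; since $\cdot^{\perp\perp}$ can only delete points from the ordinary union, one also obtains $\partial(\mathcal{P}\cup^{\perp\perp}\mathcal{Q})\subseteq\partial\mathcal{P}\cup\partial\mathcal{Q}$, which is bounded. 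Combined with the trivial fact that $\emptyset$ and $\mathbb{R}^{2}$ satisfy all three defining conditions of a Yin set, this completes the verification that $\mathbb{Y}$ is stable under the three operations.

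The main obstacle will be the delicate analysis of $\cup^{\perp\perp}$: the ordinary union of two regular open semianalytic sets can acquire spurious lower-dimensional features along the shared boundary (pinch points, slits, isolated boundary arcs), and one must confirm carefully that the double-exterior not only restores regularity but also keeps the resulting boundary semianalytic and bounded. This rests on Lojasiewicz's local description of semianalytic sets in $\mathbb{R}^{2}$, which is the one piece of nontrivial real-analytic geometry entering the argument; everything else reduces to set-theoretic bookkeeping within the lattice of regular open sets.
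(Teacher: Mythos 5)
The paper does not prove this theorem; it is imported verbatim from Zhang and Li \cite{zhang2020boolean}, so there is no internal proof to compare against. Your proposal follows the standard (and, as far as the cited source goes, the intended) route: invoke the classical fact that the regular open subsets of a topological space form a complete Boolean algebra under $\cap$, $(\cdot\cup\cdot)^{\perp\perp}$, and $^{\perp}$, then check that semianalyticity and boundedness of the boundary are stable under these operations, so that $\mathbb{Y}$ is a subalgebra containing $\emptyset$ and $\mathbb{R}^2$. That plan is sound. One directional slip: for an open set $\mathcal{P}$ one has $\mathcal{P}^{\perp\perp}=\left(\overline{\mathcal{P}}\right)^{\circ}\supseteq\mathcal{P}$, so the regularization of $\mathcal{P}\cup\mathcal{Q}$ \emph{adds} points (it fills in slits and pinch points) rather than deleting them, contrary to what you wrote. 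The boundary bound you want still holds, but for the opposite reason: since $\mathcal{P}\cup\mathcal{Q}\subseteq(\mathcal{P}\cup\mathcal{Q})^{\perp\perp}\subseteq\overline{\mathcal{P}\cup\mathcal{Q}}$, the closures coincide and hence $\partial\bigl((\mathcal{P}\cup\mathcal{Q})^{\perp\perp}\bigr)\subseteq\partial(\mathcal{P}\cup\mathcal{Q})\subseteq\partial\mathcal{P}\cup\partial\mathcal{Q}$, which is bounded.
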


Efficient algorithms of Boolean operations on Yin sets 
 have been developed in \cite{zhang2020boolean}
 and are employed in this work
 to cut \revise{and merge} cells.

The condition of Yin sets being open leads to the conclusion
\cite[Theorem 3.9]{zhang2020boolean}
that the boundary $\partial {\mathcal Y}$
 of any connected Yin set ${\mathcal Y}\ne \emptyset, \mathbb{R}^2$
 can be \emph{uniquely} decomposed into a finite set
 ${\mathcal J}_{\partial{\mathcal Y}}$
 of pairwise almost disjoint Jordan curves.
 
\begin{theorem}
  \label{thm:uniqueCases}
  Any connected Yin set ${\mathcal Y}\ne \emptyset, \mathbb{R}^2$
  can be uniquely expressed as
  \begin{equation}
    \label{eq:orientedJordanCurves}
    {\mathcal Y} = \bigcap_{\gamma_j\in {\mathcal J}_{\partial
        {\mathcal Y}}} \mathrm{int}(\gamma_j), 
  \end{equation}
  where $\mathrm{int}(\gamma_{j})$ is the complement of $\gamma_{j}$ 
  that always lies to the left of the observer who traverses $\gamma_{j}$
  according to its orientation
  and ${\mathcal J}_{\partial {\mathcal Y}}$ 
  must be one of the two types,
  \begin{equation}
    \label{eq:decomTypes}
    \renewcommand{\arraystretch}{1.2}
    \left\{
      \begin{array}{ll}
        {\mathcal J}^-
        =\{\gamma^-_1, \gamma^-_2, \ldots, \gamma^-_{n_-}\},
        & n_-\ge 1,
        \\
        {\mathcal J}^+
        =\{\gamma^+,\gamma^-_1, \gamma^-_2, \ldots, \gamma^-_{n_-}\}, 
        & n_-\ge 0,
      \end{array}
    \right.
  \end{equation}
  with $\gamma^+$ being positively oriented
  and all $\gamma^-_j$'s being
  negatively oriented and pairwise almost disjoint.
  In the second case,
  we have $\gamma_j^- \prec \gamma^+$
  for each $j=1,2,\ldots,n_-$, i.e.,
  the bounded complement of each $\gamma_j^-$
  is a subset of the bounded complement of $\gamma^+$.
\end{theorem}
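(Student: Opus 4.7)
The plan is to leverage the already-cited uniqueness of the decomposition of $\partial \mathcal{Y}$ into pairwise almost disjoint Jordan curves (from \cite[Theorem 3.9]{zhang2020boolean}) and then install a canonical orientation on each curve so that $\mathcal{Y}$ sits on its left. A case split on whether the resulting intersection is bounded or unbounded then yields the dichotomy $\mathcal{J}^+$ versus $\mathcal{J}^-$.

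\textbf{Orientation step.} Write $\partial\mathcal{Y} = \bigcup_{j} \gamma_j$ via the cited decomposition. Since $\mathcal{Y}$ is regular open and semianalytic, each $\gamma_j$ is piecewise smooth, and at every smooth boundary point $\mathcal{Y}$ occupies exactly one local side; connectedness of $\gamma_j$ minus its finitely many non-smooth points shows that this side is globally consistent along $\gamma_j$. Orient $\gamma_j$ so that this $\mathcal{Y}$-side lies to its left; this is precisely the orientation that makes $\mathrm{int}(\gamma_j)$ contain $\mathcal{Y}$ locally near $\gamma_j$.

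\textbf{Intersection step.} To prove $\mathcal{Y} = \bigcap_{j} \mathrm{int}(\gamma_j)$, the inclusion $\subseteq$ follows because any $p\in \mathcal{Y} \setminus \mathrm{int}(\gamma_j)$ could be joined by a path in the open connected set $\mathcal{Y}$ to a point on the $\mathrm{int}(\gamma_j)$-side of $\gamma_j$, but such a path would have to cross $\gamma_j \subseteq \partial \mathcal{Y}$, contradicting $\mathcal{Y} \cap \partial\mathcal{Y} = \emptyset$. The reverse inclusion $\supseteq$ holds because $\bigcap_j \mathrm{int}(\gamma_j)$ is open, disjoint from $\partial \mathcal{Y}$, and locally adheres to $\mathcal{Y}$ along each $\gamma_j$ by the orientation convention; hence its connected components must each coincide with a component of $\mathcal{Y} = \mathcal{Y}^{\perp\perp}$.

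\textbf{Dichotomy step.} By the Jordan curve theorem, $\mathrm{int}(\gamma_j)$ is bounded iff $\gamma_j$ is positively oriented. If every $\gamma_j$ is negatively oriented then $\mathcal{Y}$ is unbounded and $\mathcal{J}_{\partial\mathcal{Y}} = \mathcal{J}^-$. Otherwise pick a positively oriented curve $\gamma^+$; its uniqueness follows because any second positively oriented curve would produce either disjoint or nested bounded interiors, the first option forcing the intersection formula to yield $\mathcal{Y} = \emptyset$ and the second contradicting the pairwise almost-disjointness of the boundary curves. For each remaining negatively oriented $\gamma_j^-$, the bounded complement of $\gamma_j^-$ must lie inside the bounded complement of $\gamma^+$, since otherwise $\mathrm{int}(\gamma^+) \cap \mathrm{int}(\gamma_j^-)$ would be empty, again contradicting the intersection formula. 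This gives $\gamma_j^- \prec \gamma^+$ and the type $\mathcal{J}^+$. Uniqueness of the whole representation propagates from the uniqueness of $\{\gamma_j\}$ and from the fact that each orientation is forced by the side on which $\mathcal{Y}$ lies.

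The main obstacle will be the dichotomy step, specifically ruling out pathological configurations such as two coexisting positively oriented curves or negatively oriented curves outside $\gamma^+$. Overcoming it requires combining the bounded/unbounded dichotomy of the Jordan curve theorem with the pairwise almost-disjointness inherited from $\mathcal{J}_{\partial\mathcal{Y}}$, and using the intersection formula of the previous step as a rigid constraint that forbids empty or disconnected configurations.
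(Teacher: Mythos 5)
The paper offers no proof of its own here---it defers entirely to \cite[p.~2348]{zhang2020boolean}---so your reconstruction stands or falls on its own. Your orientation and intersection steps are essentially sound (one quibble: a Jordan curve minus finitely many non-smooth points is \emph{disconnected}, so ``connectedness'' alone does not propagate the side of $\mathcal{Y}$ across corners; you need a separate matching argument there, which regular openness supplies). The genuine gap is in the dichotomy step, where each of the three facts you lean on is false as stated. First, two almost disjoint Jordan curves can have \emph{partially overlapping} bounded complements (two circles crossing at two points), so ``disjoint or nested'' is not an exhaustive case split. Second, nested bounded complements do \emph{not} contradict pairwise almost-disjointness: two concentric circles are disjoint, hence almost disjoint, yet nested, so your argument for the uniqueness of $\gamma^+$ does not rule this configuration out. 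Third, if the bounded complement $B_j$ of $\gamma_j^-$ is disjoint from the bounded complement $B^+$ of $\gamma^+$, then $\mathrm{int}(\gamma^+)\cap\mathrm{int}(\gamma_j^-)=B^+\neq\emptyset$, so the intersection formula alone does not force $\gamma_j^-\prec\gamma^+$ either.

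The missing idea that repairs all three at once is that every point of every $\gamma_j$ lies in $\partial\mathcal{Y}\subseteq\overline{\mathcal{Y}}$, i.e.\ is an accumulation point of $\mathcal{Y}$, while your intersection step already gives $\mathcal{Y}\subseteq\bigcap_j\mathrm{int}(\gamma_j)$. A short topological argument shows that if $\gamma_i\subseteq\overline{B^+}$ for a Jordan curve $\gamma_i$ disjoint from most of $\gamma^+$, then its bounded complement is contained in $B^+$; contrapositively, in every pathological configuration (a second positively oriented curve with overlapping or nested bounded complement, or a $\gamma_j^-$ with $B_j\not\subseteq B^+$) some arc of some boundary curve lies outside $\overline{B^+}\supseteq\overline{\mathcal{Y}}$, contradicting that $\mathcal{Y}$ must accumulate on all of that curve. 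Replacing your appeals to almost-disjointness and to emptiness of the intersection with this accumulation argument makes the dichotomy step, and hence the whole proof, go through.
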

\begin{proof}
  See \cite[page 2348]{zhang2020boolean}. 
\end{proof}

This unique boundary representation extends
 to all nontrivial Yin sets 
 via appropriately orienting the boundary Jordan curves, i.e., 
 any $\mathcal{Y} \ne \emptyset,\mathbb{R}^2$
 can be \emph{uniquely} expressed \cite[Corollary
 3.13]{zhang2020boolean} as 
 $\mathcal{Y} = \sideset{}{^{\perp \perp}}\bigcup_j \bigcap_i
  \mathrm{int}(\gamma_{j,i})$
  where $j$ is the index of connected components of $\mathcal{Y}$
  and $\gamma_{j,i}$ the $i$th oriented Jordan curve
  for the $j$th component.

Following \cite{hu2025ARMS},
 we approximate oriented Jordan curves by cubic splines.
Hereafter $\mathbb{Y}_c$ denotes the class of all Yin sets
 whose boundaries consist of cubic splines.
It is straightforward to show that $\mathbb{Y}_c$ is a subspace and a subalgebra of $\mathbb{Y}$.

A Yin set $\mathcal{Y} \in \mathbb{Y}_c$ is called
 a \emph{simple splinegon} 
 if it is homeomorphic to an open disk, 
 i.e., if it is categorized as the second case $\mathcal{J}^+$ 
 in \cref{eq:decomTypes} with $n_-=0$. 
A \emph{splinegon with holes} is a Yin set in $\mathbb{Y}_c$
 categorized as the second case $\mathcal{J}^+$ with $n_{-} > 0$.

\begin{theorem}
  \label{thm:partitionOfYinSets}
  Each bounded Yin set $\mathcal{Y} \subset \mathbb{R}^2$ 
   can be partitioned into a regularized union 
   of a finite number of curved quadrilaterals and triangles.
\end{theorem}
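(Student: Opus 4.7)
The plan is to reduce the problem in three stages: from $\mathcal{Y}$ to its connected components, from each component with holes to a simple splinegon, and finally from a simple splinegon to a finite collection of curved triangles, which already proves the statement since curved triangles are a permissible piece. First, I would invoke the unique boundary decomposition for Yin sets (Corollary 3.13 of~\cite{zhang2020boolean}, recalled just before \Cref{thm:partitionOfYinSets}) to write $\mathcal{Y}$ as a regularized union of its finitely many connected components $\mathcal{Y}_1,\ldots,\mathcal{Y}_m$, and then partition each component individually. Since $\mathcal{Y}$ is bounded, type $\mathcal{J}^-$ in \cref{eq:decomTypes} is excluded (an intersection of interiors of only negatively oriented Jordan curves is unbounded), so every $\mathcal{Y}_j$ is of type $\mathcal{J}^+$ and is therefore either a simple splinegon or a splinegon with finitely many holes.

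Next, I would eliminate the holes. Given a component with holes $\gamma^-_1,\ldots,\gamma^-_{n_-}$ nested inside the outer curve $\gamma^+$, I select for each $i$ a straight segment lying in $\mathcal{Y}_j$ that joins a vertex of $\gamma^-_i$ to a vertex of $\gamma^+$, arranging the segments to be pairwise disjoint. The existence of such cuts follows by induction on $n_-$: after $i-1$ cuts have been made, the remaining region is open and path-connected, so any remaining hole can be joined to the current outer boundary by a fresh straight segment avoiding the earlier cuts, after at most a small transversal perturbation of the endpoints. After $n_-$ such cuts the component becomes a simple splinegon $\widetilde{\mathcal{Y}}_j$ bounded by a single Jordan curve whose arcs are either original spline pieces or newly inserted straight segments.

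Finally, I would triangulate each simple splinegon by a splinegon version of the classical ear-clipping argument. Take a vertex $v$ of minimum $x$-coordinate on the boundary of $\widetilde{\mathcal{Y}}_j$; the interior angle at such an extremal vertex is strictly less than $\pi$. Either the straight chord joining the two neighbors of $v$ lies entirely in $\widetilde{\mathcal{Y}}_j$, yielding a curved-triangle ear that I cut off, or the chord is obstructed by another vertex, in which case the standard argument supplies a valid diagonal from $v$ to the nearest obstructing vertex. Each such cut strictly reduces the total number of sides of the remaining splinegon, so after finitely many steps the process terminates in a family of curved triangles whose regularized union is $\widetilde{\mathcal{Y}}_j$.

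The main obstacle, and the only point at which the argument departs from the classical polygonal case, is that a spline arc may bulge into a candidate ear triangle through one of its curved sides and block every straight diagonal. To handle this I plan to exploit the semianalyticity inherited from \Cref{def:YinSets}: each spline arc meets a fixed triangle in only finitely many analytic sub-arcs, so I may insert additional vertices at the entry and exit points of such intrusions, treat each sub-arc as a new edge of the splinegon, and restart the ear-clipping. Since only finitely many new vertices are introduced in total, the recursion still terminates with a finite partition of $\mathcal{Y}$ into curved triangles, which proves the claim.
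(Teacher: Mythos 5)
Your overall skeleton agrees with the paper's: reduce to bounded connected components, use \cref{thm:uniqueCases} and boundedness to exclude type $\mathcal{J}^-$ in \cref{eq:decomTypes}, convert a splinegon with holes into a simple splinegon by cutting bridges to the outer curve, and then decompose the simple splinegon. The two proofs part ways at the last step. The paper maps the simple splinegon homeomorphically onto a disk, fans the disk from its center to every other boundary vertex, and pulls the radii back; because the resulting ``diagonals'' are allowed to be \emph{curved}, no diagonal can ever be obstructed by a bulging spline arc, and one immediately gets finitely many curved quadrilaterals plus at most one triangle (\cref{fig:simplePolygonToDisk}). You instead insist on straight diagonals and run ear clipping, which is more constructive (and your extremal-vertex ray-shooting argument for the existence of the bridges is more explicit than the paper's appeal to connectedness), but it is precisely this insistence on straight chords that creates the obstruction problem you then have to patch. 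Your single straight cut per hole, versus the paper's \emph{pair} of bridge edges (\cref{fig:Jordan_polygon_with_holes}), is a minor presentational difference: the slit is traversed twice by the resulting boundary curve, which is harmless for ear clipping but means the result is a degenerate, not genuine, simple splinegon.

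The genuine gap is in your termination argument for the patched ear clipping. You justify finiteness by saying that semianalyticity lets each spline arc meet ``a fixed triangle'' in finitely many sub-arcs, but the candidate ear triangles are not fixed: every time you insert entry/exit vertices and restart, the extremal vertex, its neighbors, and hence the new candidate chords and triangles all change, and the new chords can be obstructed by intrusions that were irrelevant before. Nothing in your argument bounds the total number of inserted vertices, so ``the recursion still terminates'' is asserted rather than proved. To close this you would need either a genuine a priori bound (e.g., pre-subdividing every spline arc into finitely many convex, coordinate-monotone pieces and then showing that an extremal vertex of such a configuration always admits an unobstructed ear or diagonal), or you could simply drop the straight-chord requirement: since the target pieces are \emph{curved} triangles and quadrilaterals anyway, curved diagonals are permissible, and taking them as preimages of straight segments under a homeomorphism to the disk --- as the paper does --- dissolves the obstruction issue entirely.
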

\begin{proof}
  It suffices to prove the statement
  for bounded and connected Yin sets in $\mathbb{Y}_c$.
  The boundedness of $\mathcal{Y}$ precludes the first case 
   $\mathcal{J}^{-}$ in \cref{eq:decomTypes} 
   and \cref{thm:uniqueCases} dictates that 
   $\mathcal{Y}$ be represented as $\mathcal{J}^+$. 
  By \cref{eq:decomTypes}, we only need to consider two possibilities.  
  \begin{itemize}
  \item $n_{-} = 0$: $\mathcal{Y}$ is a simple splinegon
    homeomorphic to an open disk. 
    If $\mathcal{Y}$ has no more than four vertices, 
    the conclusion holds trivially
    since it is already a curved quadrilateral or triangle.
    Otherwise we map $\mathcal{Y}$ by a homeomorphism $\varphi$
     to an open disk,
     partition the disk by connecting its center
     to every other vertex on the circle,
     and the images of these radii under $\varphi^{-1}$
     partition $\mathcal{Y}$ into  
     a regularized union 
     of curved quadrilaterals and triangles.
    By \cref{def:YinSets},
     $\mathcal{Y}$ is semianalytic and thus the number of
     quadrilaterals is finite.
    As shown in \cref{fig:simplePolygonToDisk},
     the number of triangles is at most one: 
     the triangle exists if and only if
     the number of vertices of ${\cal Y}$ is odd.
   \item $n_{-} > 0$: $\mathcal{Y}$ is a splinegon with holes.
     As shown in \cref{fig:Jordan_polygon_with_holes}, 
      we add a pair of ``bridge'' edges 
      from a vertex of each $\gamma_j^{-}$
      to some vertex in $\gamma^+$ of $\mathcal{J}^+$
      in \cref{eq:decomTypes}; 
      the existence of these bridge edges
      is guaranteed by the connectedness of ${\cal Y}$.
      Now that ${\cal Y}$ is approximated arbitrarily well
      by a simple splinegon, 
      the rest of the proof follows from the previous paragraph.
    \end{itemize} 
\end{proof}

\begin{figure}
  \centering
  \includegraphics[width=0.98\textwidth]
    {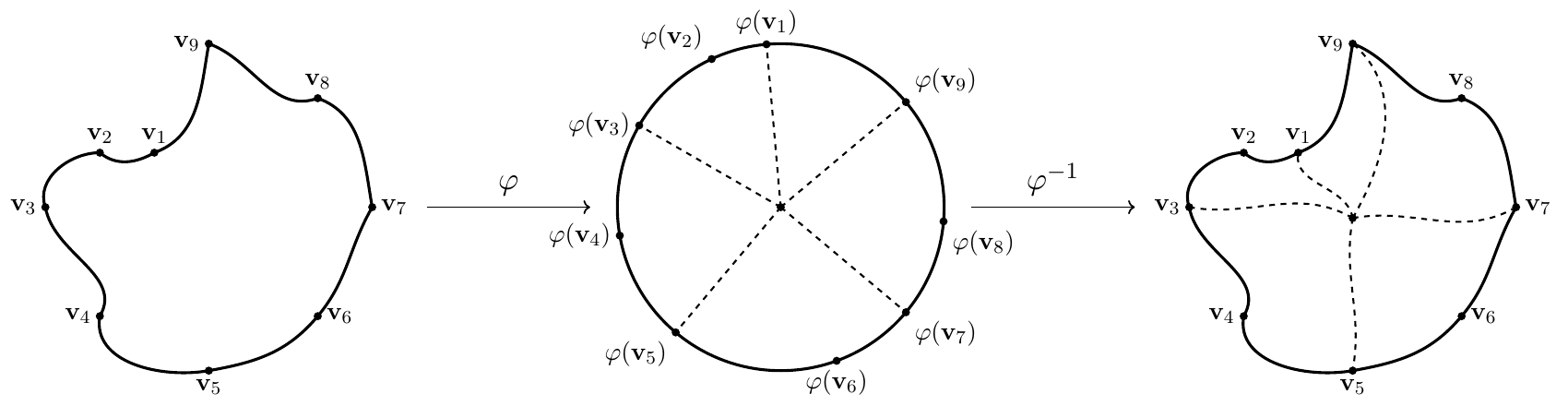} 
  \caption{$\mathcal{Y}$ is homeomorphic to an open disk 
   in the case of $n_{-} = 0$. 
   The dashed straight lines represent the division of the disk, and
   the dashed curves represent the division of $\mathcal{Y}$.}
  \label{fig:simplePolygonToDisk}
\end{figure}

\begin{figure}
  \centering
  \subfigure[Creating ``bridge'' edges.]{
    \includegraphics[width=0.4\textwidth, height=40mm]
      {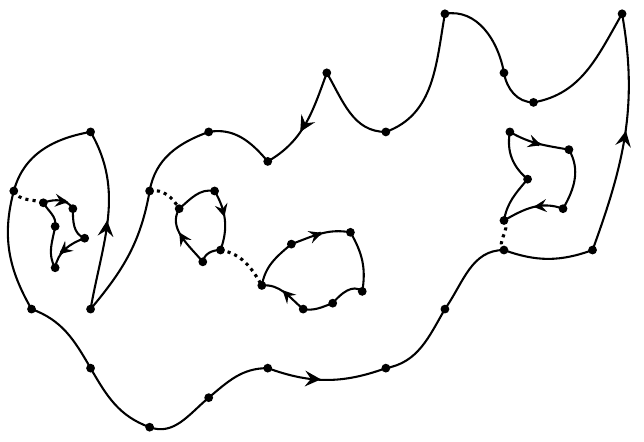}
    \label{fig:Jordan_polygon_with_holes_1}
  }
  \subfigure[The resulting simple splinegon.]{
    \includegraphics[width=0.4\textwidth, height=40mm]
      {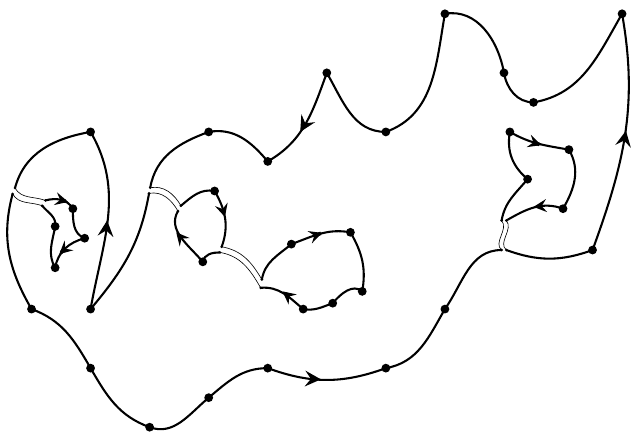}
    \label{fig:Jordan_polygon_with_holes_2}
  } 
  \caption{Transforming a splinegon with holes into a simple splinegon.}  
  \label{fig:Jordan_polygon_with_holes}
\end{figure}

\section{Algorithm}
\label{sec:algorithm}
We embed the problem domain $\Omega$
 inside a rectangular region
 $\Omega_R \in \mathbb{Y}_c$ 
 and divide $\Omega_R$ by structured rectangular grids 
 into control volumes or cells, 
 \begin{equation}
   \label{eq:controlVolume}
   \mathbf{C}_\bi \coloneqq 
   \left(\bi h, (\bi+\mathbbm{1})h \right), 
 \end{equation}
 where $h$ is the uniform grid size, 
 $\bi\in\mathbb{Z}^2$ a multi-index, 
 and $\mathbbm{1}\in\mathbb{Z}^2$ the multi-index 
 with all components equal to one.
The assumption of $h$ being uniform
 is for ease of exposition only, 
 as our algorithm also applies to
 non-uniform grids.
In both cases, we have
$\Omega_R = \cup^{\perp\perp}_{\bi} \mathbf{C}_\bi$
where $\cup^{\perp\perp}$ is the regularized union operation
in \cref{thm:BooleanAlgebra}. 
 
For an irregular domain $\Omega$, 
we define the $\bi$th \emph{cut cell}
and the $\bi$th \emph{cut boundary} as
\begin{equation}
  \label{eq:cutCellAndBdry}
  \mC_\bi \coloneqq \mathbf{C}_\bi \cap \Omega;  \quad
  \mS_\bi \coloneqq \mathbf{C}_\bi \cap \partial \Omega,
 \end{equation}
 where $\cap$ is the intersection operation
 in \cref{thm:BooleanAlgebra}. 
Clearly we have $\Omega = \cup^{\perp\perp}_{\bi} \mC_\bi$. 

A cut cell is classified as 
 an \emph{empty cell}, a \emph{pure cell},
 or an \emph{interface cell} if 
 it satisfies $\mC_\bi = \emptyset$, 
 $\mC_\bi = \mathbf{C}_\bi$,
 or otherwise, respectively.

The averaged values of a scalar function $g:\Omega\to \mathbb{R}$ 
 over a nonempty cut cell $\mC_\bi$
 and a cut boundary $\mS_\bi$ 
 are respectively,
\begin{equation}
  \label{eq:cellAndFaceAverage}
  \avg{g}_\bi \coloneqq 
  \frac{1}{\|\mC_\bi\|} \iint_{\mC_\bi} g(\bx) \ \dif \bx;
  \quad
  \bbrk{g}_\bi \coloneqq 
  \frac{1}{\|\mS_\bi\|} \int_{\mS_\bi} g(\bx)\ \dif \bx,
\end{equation}
where $\|\mC_{\bi}\|$ is the volume of $\mC_{\bi}$
and $\|\mS_\bi\|$ the length of $\mS_\bi$. 

The time interval $[0,T]$ is partitioned
 into $N_T$ subintervals 
 with a uniform time step size $k=\frac{T}{N_T}$
 so that $t_n \coloneqq nk$ for $n=0,\ldots,N_T$.
To numerically solve the advection equation
\cref{eq:advection_control_equation}, 
it suffices to approximate $\avg{\rho(\bx,t_{n+1})}_\bi$
 for an arbitrary cut cell $\mC_\bi$
 from the given velocity field $\mathbf{u}$,
 the initial condition
 $\avg{\rho}^{n}_\bi\approx \avg{\rho(\bx,t_{n})}_\bi$, 
 and the boundary condition of $\rho$.

\begin{definition}[SLFV]
  \label{def:SLFV}
  The \emph{SLFV method} is 
  a finite volume method for solving the advection equation
  \cref{eq:advection_control_equation}
  by approximating $\avg{\rho(\bx,t_{n+1})}_\bi$
  with steps as follows.
   \begin{enumerate}[(SLFV-1)]
   \item For each cut cell $\mC_{\bi}\subset \Omega$
     and any scalar function $g:\mC_{\bi}\to \mathbb{R}$,
     determine a quadrature formula 
     \begin{equation}
       \label{eq:quadrature}
       I_{\bi}(g) \coloneqq 
       \sum\nolimits_{m=1}^M \omega_{\bi,m} g(\bx_{\bi,m})
     \end{equation}
     by calculating the nodes $\{\bx_{\bi,m}\}_{m=1}^M$
     and the weights $\{\omega_{\bi,m}\}_{m=1}^M$ of $I_{\bi}$
     from \cref{eq:Gauss-quadrature}
     in \Cref{sec:Gauss-quadrature-formula}
     and the partitioning of $\mC_{\bi}$
     in \cref{thm:partitionOfYinSets}.
   \item For each node $\bx_{\bi,m}$,
     compute a set of points
     $\{\bx_{\bi,m}^{n+c_i}\approx \phi_{t_{n+1}}^{-(1-c_{i})k}(\bx_{\bi,m})\}$
     on the pathline $\Phi_{t_{n+1}}^{-k}(\bx_{\bi,m})$
     by solving the ODE \cref{eq:ODE-flow-map}. 
     The approximate preimage 
     of $\bx_{\bi,m}$ at $t_n$ is denoted by
     $\overleftarrow{\bx_{\bi,m}}\coloneqq  \bx_{\bi,m}^{n+0}$;
     \revise{see \Cref{fig:GaussPointsBacktracking}}.
   \item If $\overleftarrow{\bx_{\bi,m}}\not\in \Omega$,
     calculate
     $\tilde{\rho}^{n+1}(\bx_{\bi,m})\approx \rho(\bx_{\bi,m},t_{n+1})$
     by \cref{eq:RKfromIntersection} in \Cref{sec:penetration}. 
   \item Otherwise approximate $\rho(\overleftarrow{\bx_{\bi,m}},t_n)$
     by $f(\overleftarrow{\bx_{\bi,m}})$,
     the value of a local bi-variate polynomial $f$
     fitted from the initial condition $\avg{\rho}^n_\bi$
     and the boundary condition of $\rho$ at $t_n$; 
     see \Cref{sec:reconstruction} for an elaboration
     of this process.
     Then calculate
     $\tilde{\rho}^{n+1}(\bx_{\bi,m})
     \approx \rho(\bx_{\bi,m},t_{n+1})$
     by solving \cref{eq:lagrangian_derivative}
     with an $s$-stage Runge-Kutta (RK) method, 
     \begin{equation}
       \label{eq:RK}
       \begin{aligned}
         \forall i=1,\ldots,s,\ \  
         Y_i &= 
         S\left(
           \bx_{\bi,m}^{n+c_i}, \ 
           t_n + c_i k
         \right), \\
         \tilde{\rho}^{n+1}(\bx_{\bi,m}) &= 
         f(\overleftarrow{\bx_{\bi,m}}) + 
         k\sum\nolimits_{j=1}^s b_j Y_j,
       \end{aligned}
     \end{equation}
     where $b_j,\ c_i$ are coefficients of the RK method
     and $\bx_{\bi,m}^{n+c_i}$'s points
     of the pathline $\Phi_{t_{n+1}}^{-k}(\bx_{\bi,m})$ obtained in (SLFV-2).
   \item Repeat (SLFV-2,3,4) for each quadrature node
     $\bx_{\bi,m}$ in \cref{eq:quadrature} 
     and set the solution of SLFV for the cut cell ${\cal C}_{\bi}$
     as $\avg{\rho}^{n+1}_\bi\coloneqq I_{\bi}(\tilde{\rho}^{n+1})
     \approx \avg{\rho(\bx,t_{n+1})}_\bi$.
   \end{enumerate}
 \end{definition}

\revise{
\begin{figure}
  \centering
  \subfigure[\revise{Calculating preimages of Gauss quadrature points 
  inside a cut cell $\mC_{\bi}$.
  In particular, the preimage of $\partial\mC_{\bi}$ is not constructed.}
  ]{ 
    \includegraphics[width=0.45\textwidth]
      {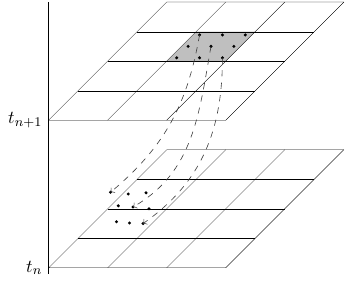}
    \label{fig:GaussPointsBacktracking} 
  }
  \hspace{1cm}
  \subfigure[\revise{Intersecting the pathline with the domain boundary
  if the preimage of $\mathbf{x}_{\bi,m}$ is not in $\Omega$.}]{
    \includegraphics[width=0.3\textwidth]
      {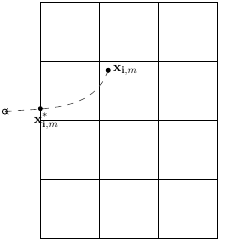}
    \label{fig:OnePointBacktracking}
  } 
  \caption{The backtracking of Gauss quadrature points inside a
    control volume. 
    The gray quadrilateral represents the cut cell $\mC_{\bi}$ 
     for which the averaged value of $\rho$ at time $t_{n+1}$ is sought.
    A dashed line with an arrowhead represents a pathline 
     obtained by numerically solving the ODE \cref{eq:ODE-flow-map}.
    (a) The solid dots at $t_{n+1}$ represent the Gauss quadrature points
     while those at $t_n$ denote their preimages.
     If the preimage of a quadrature point $\mathbf{x}_{\bi,m}$ is not in $\Omega$, 
     the pathline of $\mathbf{x}_{\bi,m}$ must intersect the domain boundary
     at some time $t^*\in (t_n, t_{n+1})$.
    (b) $\bx_{\bi,m}$ and $\bx_{\bi,m}^*$ represent 
     the Gauss quadrature point at time $t_{n+1}$ and 
     the intersection of its pathline with the domain boundary at $t^*$, 
     respectively.
     The hollow circle denotes the preimage of $\bx_{\bi, m}$.
     }
  \label{fig:backtracking}
\end{figure}
}

The above steps of the SLFV method are orthogonal
 in that the subproblems of complex geometry,
 temporal integration, and polynomial reconstruction
 are decoupled. 
Consequently,
 fourth- and higher-order accuracy both in time and in space
 can be achieved in a flexible manner. 
In addition, the steps of SLFV remain the same
 regardless of the source term $S$ being zero or not; 
 this answers (Q-\ref{Q-3}).

\revise{
\begin{remark}
  In a family of conservative semi-Lagrangian methods
  \cite{lauritzen2010conservative,crouseilles2014new,einkemmer2024semi},
  the preimage of the boundary of a control volume
  is constructed
  and its intersection with nearby control volumes is computed. 
  In our SLFV method, however, 
   this intersection is avoided
   by calculating  the preimages
   of Gauss quadrature points;
   see \cref{fig:GaussPointsBacktracking}.
  Only for handling incoming penetration domains
   would we compute the intersection of 
   a particle pathline and the domain boundary;
   see \cref{fig:OnePointBacktracking}. 
\end{remark}
} 

In the rest of this section,
 we fully explain the key components of SLFV.

\subsection{Gauss quadrature on interface cells}
\label{sec:Gauss-quadrature-formula}

Since $\partial \Omega$ is of co-dimension one, 
 most nonempty cut cells are pure cells, 
 for which Gauss quadrature formulas can be obtained
 by tensor products of one-dimensional formulas
 and will not be discussed further. 
Here we design a three-step quadrature formula for interface cells. 
\begin{enumerate}[({GQI}-1)]
\item Partition the interface cell $\mC_{\bi}$ 
  into a union of curved quadrilaterals and triangles; 
  see \cref{thm:partitionOfYinSets}
  and \cref{fig:simplePolygonToDisk} for such an algorithm.
\item \label{GQI-2}
  Calculate integrals on each curved quadrilateral or triangle
  via an isoparametric mapping and the blending function method
  \cite[p.99]{szabo2021finite}.
\item Sum up the subintegrals to approximate the integral over $\mC_{\bi}$.
\end{enumerate}

The first part of (GQI-2) is an isoparametric mapping
 $\bM(\xi,\eta)=\sum_{i=1}^4 N_i(\xi,\eta)\bv_i$
 where $\bv_i$'s are vertices of a quadrilateral and
 the Lagrangian shape functions are
\begin{equation*}
  \begin{aligned}
    N_1(\xi,\eta) = \frac{1}{4}(1-\xi)(1-\eta), \quad &
    N_2(\xi,\eta) = \frac{1}{4}(1+\xi)(1-\eta), \\
    N_3(\xi,\eta) = \frac{1}{4}(1+\xi)(1+\eta), \quad &
    N_4(\xi,\eta) = \frac{1}{4}(1-\xi)(1+\eta).
  \end{aligned}
\end{equation*}
As shown in \cref{fig:standardQuadrilateral}, 
 $\bM$ maps $[-1,1]^2$ to the given quadrilateral.
Hereafter we denote by
$\mathbf{E}_i\coloneqq(E_{i,x},E_{i,y})$ 
the parametrization of the $i$th (linear or curved) edge
 $\wideparen{\bv_{i}\bv_{i+1}}$
 of a quadrilateral
 with the convention $\bv_5=\bv_1$. 

\begin{figure}
  \centering
  \includegraphics[width=0.6 \textwidth]
    {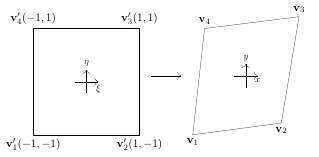}
  \vspace{-15pt}
  \caption{A mapping from $[-1,1]^2$ to a given quadrilateral with
    linear edges.}
  \label{fig:standardQuadrilateral}
\end{figure}

\begin{figure}[h!]  
  \centering
  \includegraphics[width=0.7 \textwidth]
    {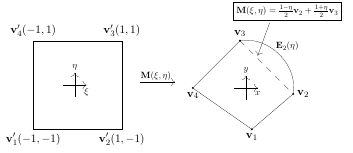} 
  \vspace{-15pt}
  \caption{A mapping from $[-1,1]^2$ to a quadrilateral 
    with one curved edge.}
  \label{fig:BlendingFunctionMethod}
\end{figure}

Next, we illustrate the main idea of the blending function method
 by the simple case in the right-hand side (RHS) of
 \cref{fig:BlendingFunctionMethod}, 
 where only one edge of the quadrilateral is curved.
The canonical square $[-1,1]^2$ is mapped to the curved quadrilateral
 by 
\begin{equation}
  \label{eq:blendingFunctionMapping2nd}
  \begin{aligned}
  \bM(\xi, \eta) =
  & \frac{1}{4}(1-\xi)(1-\eta)\bv_1 +
    \frac{1}{4}(1+\xi)(1-\eta)\bv_2 +
    \frac{1}{4}(1+\xi)(1+\eta)\bv_3 \\
  &+\frac{1}{4}(1-\xi)(1+\eta)\bv_4 + 
    \left(
      \mathbf{E}_2(\eta) - 
      \frac{1-\eta}{2}\bv_2 -
      \frac{1+\eta}{2} \bv_3
    \right) \frac{1+\xi}{2}, 
  \end{aligned}
\end{equation}
 where the first four terms represent 
 the linear isoparametric mapping 
 and the last a product of two functions: 
 the one in the parentheses represents the difference 
 between $\mathbf{E}_2(\eta)$ and 
 the chord joining $\bv_2$ and $\bv_3$
 while the other is a linear function 
 that equals $1$ on $\wideparen{\bv_2\bv_3}$ 
 and 0 on $\overline{\bv_4\bv_1}$. 
The mapping in \cref{eq:blendingFunctionMapping2nd} simplifies to 
\begin{equation}
  \label{eq:reBlendingFunctionMapping2nd}
  \bM(\xi, \eta) = 
  \frac{1}{4}(1-\xi)(1-\eta)\bv_1 + 
  \frac{1}{4}(1-\xi)(1+\eta)\bv_4 + 
  \mathbf{E}_2(\eta)\frac{1+\xi}{2}.
\end{equation}

The above mapping generalizes in a straightforward manner to the general case 
 where all edges of the quadrilateral are curved: 
\begin{equation}
  \label{eq:belndingFunctionMapping}
  \begin{aligned}
    \bM(\xi,\eta) = 
    & \frac{1}{2}\left[
        (1-\eta)\mathbf{E}_1(\xi) + 
        (1+\xi)\mathbf{E}_2(\eta) + 
        (1+\eta)\mathbf{E}_3(\xi) +
        (1-\xi) \mathbf{E}_4(\eta) 
      \right] \\
    &-\sum\nolimits_{i=1}^4 N_i(\xi,\eta) \bv_i. 
  \end{aligned}
\end{equation}

\begin{figure}[h!]  
  \centering
  \includegraphics[width=0.3\textwidth]
    {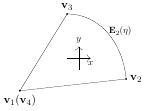} 
  \vspace{-15pt}
  \caption{A curved triangle with one curved edge.}
  \label{fig:BlendingFunctionForTriangle}
\end{figure}

A curved triangle is treated as a special quadrilateral
 with one edge collapsed into a single vertex. 
In the case of \cref{fig:BlendingFunctionForTriangle}, 
 the mapping in \cref{eq:reBlendingFunctionMapping2nd}
 reduces to 
\begin{equation}
  \label{eq:Mapping4Triangle-1}
  \bM(\xi, \eta) = \frac{1-\xi}{2}\bv_1 +
  \frac{1+\xi}{2}\mathbf{E}_2(\eta), 
\end{equation}
 whose Jacobian matrix is 
\begin{equation*}
  J = \begin{pmatrix}
    -\frac{1}{2}x_1+\frac{1}{2}E_{2,x}(\eta) &
    \frac{1+\xi}{2}E_{2,x}'(\eta) \\
    -\frac{1}{2}y_1+\frac{1}{2}E_{2,y}(\eta) &
    \frac{1+\xi}{2}E_{2,y}'(\eta)
  \end{pmatrix}.
\end{equation*}
As $\xi$ approaches $-1$, 
 the determinant $|J|$ of the Jacobian maxtrix approaches zero, 
indicating a singularity at $\bv_1$. 
Fortunately,
 it is well known that
 all nodes of a Gauss formula are contained in the integral domain
 and hence, for any fixed degree of exactness,
 their distances to the integral boundary 
 are bounded below by a positive real number. 
Thus $|J|$ also has a positive lower bound that never approaches zero 
 and the mapping in \cref{eq:Mapping4Triangle-1} remains valid
 for sending the quadrature nodes in $[-1,1]^2$
 to those inside the triangle. 

For a triangle whose edges are all curved, 
the mapping in \cref{eq:belndingFunctionMapping} 
 reduces to 
\begin{equation}
  \label{eq:Mapping4Triangle-2}
  \begin{aligned}
    \bM(\xi,\eta) = 
    & \frac{1}{2}\left(
        (1-\eta)\mathbf{E}_1(\xi) + 
        (1+\xi)\mathbf{E}_2(\eta) + 
        (1+\eta)\mathbf{E}_3(\xi)
      \right) \\
    &-N_2(\xi,\eta)\bv_2 - N_3(\xi,\eta)\bv_3.
  \end{aligned}
\end{equation}

To sum up, the Gauss formula
 of approximating the integral of a scalar function $g$ 
 over a curved quadrilateral or a curved triangle is summarized as 
\begin{equation}
  \label{eq:Gauss-quadrature}
  I(g) \coloneqq 
  \sum\limits_{i'=1}^{m_\xi} \sum\limits_{j'=1}^{m_\eta} 
  \omega_{i',j'}g(\bM(\xi_{i'},\eta_{j'})),
\end{equation}
where $\bM$ is given
by \cref{eq:belndingFunctionMapping}
and \cref{eq:Mapping4Triangle-2}
for a curved quadrilateral and a curved triangle,
respectively, 
$\{\xi_{i'}\}_{i'=1}^{m_\xi}$, 
$\{\eta_{j'}\}_{j'=1}^{m_\eta}$ 
and $\{\omega_{i'}\}_{i'=1}^{m_\xi}$, 
$\{\omega_{j'}\}_{j'=1}^{m_\eta}$ 
are the nodes and weights of the one-dimensional Gauss formula
in the $\xi$ and $\eta$ directions, respectively, 
and $\omega_{i',j'} \coloneqq 
  \omega_{i'}\omega_{j'} |J(\xi_{i'},\eta_{j'})|$.

Our Gauss formula (GQI-1,2,3)
 answers (Q-\ref{Q-1}) since it
 applies to 
 irregular domains 
 with arbitrarily complex topology and geometry.

\subsection{Handling incoming penetration domains}
\label{sec:penetration}
Recall from \cref{eq:incomingSet} that
 a/an (incoming) penetration domain is a non-periodic domain
 that satisfies $\Gamma^-\ne \emptyset$.
In this case the pathline $\Phi_{t_{n+1}}^{-k}(\bx_{\bi,m})$
 constructed in (SLFV-2)
 might intersect $\partial \Omega$. 
The existence of such an intersection is implied
 by the condition $\overleftarrow{\bx_{\bi,m}}\not\in\Omega$
 in \mbox{(SLFV-3)}, 
 which can be detected 
 by shooting a ray from $\overleftarrow{\bx_{\bi,m}}$
 and verifying that the number of intersections of the ray
 to $\partial \Omega$ is even.

Suppose the pathline $\Phi_{t_{n+1}}^{-k}(\bx_{\bi,m})$ intersects $\partial \Omega$ at
 $\bx_{\bi,m}^*$ and $t^*\in (t_n,t_{n+1})$, 
 \revise{as shown in \cref{fig:OnePointBacktracking},} 
 we can approximate $\rho(\bx_{\bi,m},t_{n+1})$
 from the boundary condition $\rho_{\mathrm{bc}}(\bx_{\bi,m}^*,t^*)$
 by solving \cref{eq:lagrangian_derivative}
 along a shortened pathline $\Phi_{t^*}^{t_{n+1}-t^*}(\bx_{\bi,m}^*)$
 with $\rho_{\mathrm{bc}}(\bx_{\bi,m}^*,t^*)$ as its initial condition.

To determine $\bx_{\bi,m}^*$ and $t^*$, 
 we define a function $\mathbf{F}(s,t)\coloneqq 
 \gamma(s) - \phi_{t_{n+1}}^{t-t_{n+1}}(\bx_{\bi,m})$
 where $s\in[0,L]$
 and $\gamma\coloneqq (\gamma_x,\gamma_y)$
 is a local arc of $\partial\Omega$
 that covers $\bx_{\bi,m}^*$. 
Then we solve the nonlinear equation $\mathbf{F}(s^*,t^*)=(0,0)$
 with the Newton iteration,  
\begin{equation*}
  \begin{pmatrix}
    s^{(\iota+1)} \\ t^{(\iota+1)}
  \end{pmatrix} = 
  \begin{pmatrix}
    s^{(\iota)} \\ t^{(\iota)}
  \end{pmatrix} - 
  \left(
    \frac{\partial \mathbf{F}}{\partial (s,t)}
    \left(s^{(\iota)},t^{(\iota)}\right)
  \right)^{-1} 
  \mathbf{F}\left(s^{(\iota)},t^{(\iota)}\right)^\top, 
\end{equation*} 
where the Jacobian matrix,
by the flow map $\phi$ in \cref{eq:flow-maps}, is 
\begin{equation*}
  \frac{\partial \mathbf{F}}{\partial (s,t)}(s,t) = 
  \begin{pmatrix}
    \gamma_x'(s) & -u\left(
      \phi_{t_{n+1}}^{t-t_{n+1}}(\bx_{\bi,m}),t
    \right) \\
    \gamma_y'(s) & -v\left(
      \phi_{t_{n+1}}^{t-t_{n+1}}(\bx_{\bi,m}),t
    \right)
  \end{pmatrix},
\end{equation*}
\revise{and $\mathbf{u}=(u,v)^\top$}. 

The iteration starts with $t^{(0)}=t_{n+1}$ 
and $\gamma(s^{(0)})$, the point in $\partial\Omega$
closest to $\bx_{\bi,m}$, 
and continues until 
$\left\|\mathbf{F}\left(s^{(\iota)},t^{(\iota)}\right)\right\|_2$ 
falls below a user-defined threshold 
or the maximum number of iterations is reached. 
 
After determining $t^*$ and $\bx_{\bi,m}^*$, 
 we set $k^* \coloneqq t_{n+1} - t^*$
 and solve a modified version of \cref{eq:RK}
 with $t^*$ as the initial time 
 and $\rho_{\text{bc}}(\bx_{\bi,m}^*,t^*)$ as the initial value, 
 \begin{equation}
   \label{eq:RKfromIntersection}
    \begin{aligned}
    \forall i=1,\ldots,s,\ \  
      Y_i &= 
      S\left(
        \bx_{\bi,m}(t^* + c_i k^*), \ 
        t^* + c_i k^*
      \right), \\
       \tilde{\rho}^{n+1}(\bx_{\bi,m}) &= 
        \rho_{\text{bc}}(\bx_{\bi,m}^*, t^*) + 
        k^*\sum\nolimits_{j=1}^s b_j Y_j.
    \end{aligned}
\end{equation}

The above steps constitute
 an efficient and accurate treatment
 of incoming penetration conditions, 
 answering (Q-\ref{Q-2}).

\subsection{Polynomial reconstruction from cell averages}
\label{sec:reconstruction}

We start with polynomial fitting of cell averages.

\begin{definition}
  Denote by $\Pi_q$ the linear space
  of all real-coefficient bi-variate polynomials 
  of degree no greater than $q$ 
  and write $N\coloneqq\mathrm{dim}\Pi_q$.
  For a finite class of pairwise disjoint Yin sets 
   $\mathcal{T} \coloneqq \{\mC_{\bi_1},\ldots,\mC_{\bi_N}\}$
   and a corresponding set of values
   $f_1,\ldots,f_N\in \mathbb{R}$, 
   the \emph{Lagrange interpolation problem (LIP) for cell averages}
   seeks a polynomial $f\in\Pi_q$ such that
   $\avg{f}_{\bi_j}$, the average of $f$ over $\mC_{\bi_j}$, equals
   $f_j$ for each $j$, i.e., 
   \begin{equation}
     \label{eq:LIP}
     \forall j=1,\ldots,N, \quad \avg{f}_{\bi_j}=f_j.
   \end{equation}
\end{definition}

A LIP is said to be \emph{unisolvent} 
 if for \emph{any} given sequence of cell averages $(f_j)_{j=1}^N$ 
 there exists $f\in\Pi_q$ satisfying \cref{eq:LIP}; 
 then we also call
 the set $\mathcal{T}$ a \emph{poised lattice}. 

\begin{figure}
  \centering
  \subfigure[The standard stencil for pure cells
  far from the domain boundary.]{
    \includegraphics[width=0.29\linewidth]
    {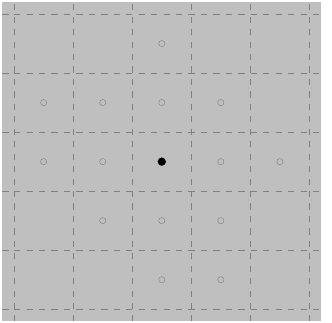}
    \label{fig:5th-regular-PLG}
  }
  \hfill
  \subfigure[A lattice for a pure cell near the domain boundary.]{
      \includegraphics[width=0.29\linewidth]
        {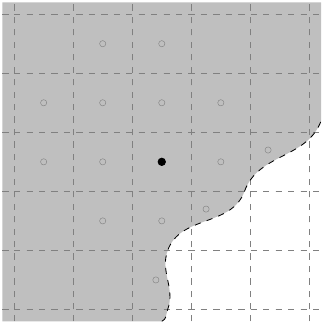}
      \label{fig:5th-purecell-PLG}
  } 
  \hfill
  \subfigure[A lattice for an interface cell near the domain boundary.]{
      \includegraphics[width=0.29\linewidth]
        {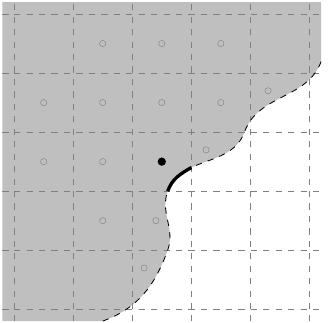}
      \label{fig:5th-irregular-PLG}
  }
  \caption{Poised lattices for the fifth-order polynomial reconstruction
    from cell averages.
    The gray region represents the computational domain. 
    The solid dot marks the target cut cell $\mC_\bi$, 
    while solid and hollow dots together represent the poised lattice. 
    The solid curve represents the additional boundary portion 
    within the target cell.
    \revise{The three subplots cover all cases.}
  } 
\end{figure}

For a pure cell far from the domain boundary, 
 we use the standard poised lattice
 shown in \cref{fig:5th-regular-PLG}.
\revise{
 Whenever the stencil in \cref{fig:5th-regular-PLG}
 involves an empty cell, 
 we employ the AI-aided algorithm in \cite{zhang2024PLG} 
 to generate a poised lattice.
 \cref{fig:5th-purecell-PLG} and \cref{fig:5th-irregular-PLG}
 demonstrate poised lattices generated by this AI-aided algorithm
 for a pure cell near the domain boundary
 and an interface cell, respectively.
With an optimal complexity,
 this AI-aided algorithm obviates the need for ghost cells
 at an irregular boundary;
 see \cite[Fig. 3]{zhang2024PLG} for more details. 
}
As shown in \cref{fig:5th-irregular-PLG}, 
 the cut boundary $\mS_\bi$
 of an interface cell $\mC_\bi$
 is also added into 
 the generated lattice 
 to incorporate the boundary condition.
Then we define
\begin{equation*}
  \boldsymbol{\rho} 
  \coloneqq [\rho_1,\ldots,\rho_N,\rho_b]^\top
  = \left[
      \avg{\rho}_{\bi_1},\ldots,\avg{\rho}_{\bi_N},
      \bbrk{\rho}_{\bi}
    \right]^\top \in \mathbb{R}^{N+1}.
\end{equation*}
Denote by $\boldsymbol{\alpha}\coloneqq[\alpha_1,\ldots,\alpha_N]^\top$ 
 coefficients of a polynomial 
 $f(\bx)=\sum_{i=1}^N \alpha_i\psi_i(\bx)\in\Pi_q$, 
 where $\psi_i$'s are the monomial bases of $\Pi_q$. 
To obtain a $(q+1)$th-order approximation of $\rho(\bx,t_n)$, 
 we determine $\boldsymbol{\alpha}$ by minimizing
\begin{equation*}
  \begin{split}
  & \sum\nolimits_{j=1}^N w_j \left|\avg{f}_{\bi_j} - \rho_j\right|^2 + 
   w_b \left|\bbrk{f}_{\bi} - \rho_b\right|^2 \\
  =& \sum\nolimits_{j=1}^N w_j \left(
    \sum\nolimits_{i=1}^N \alpha_i \avg{\psi_i}_{\bi_j}-\rho_j
   \right)^2 + w_b\left(
    \sum\nolimits_{i=1}^N \alpha_i \bbrk{\psi_i}_{\bi}-\rho_b
   \right)^2 \\
   =& (\boldsymbol{\alpha}^{\top} M - \boldsymbol{\rho}^{\top})W
   (M^\top\boldsymbol{\alpha} - \boldsymbol{\rho})
   = 
   \left\|M^\top\boldsymbol{\alpha} - \boldsymbol{\rho}\right\|_W^2,
  \end{split}
\end{equation*}
where $W$ is a positive definite diagonal matrix with 
 $W\coloneqq\mathrm{diag}(w_1,\ldots,w_N,w_b)$, 
 the \emph{energy norm} is given by
 $\|\mathbf{v}\|_W \coloneqq \sqrt{\mathbf{v}^{\top} W \mathbf{v}}$,
 and 
 \begin{equation}
   \label{eq:matrixM}
  M = 
  \begin{pmatrix}
    \avg{\psi_1}_{\bi_1} & \cdots & \avg{\psi_1}_{\bi_N} & \bbrk{\psi_1}_\bi \\ 
    \vdots & \ddots & \vdots & \vdots \\
    \avg{\psi_N}_{\bi_1} & \cdots & \avg{\psi_N}_{\bi_N} & \bbrk{\psi_N}_\bi
  \end{pmatrix}
  \in\mathbb{R}^{N \times (N+1)}.
\end{equation}
In this work, we
 set $w_j\coloneqq\min\{\|\bi_j-\bi\|_2^{-1}, w_{\max}\}$, 
 $w_b\coloneqq w_{\max}$, 
 and $w_{\max}=2$ to avoid large weights. 

For interface cells,
 the above minimization leads to a problem of weighted least squares, 
 which we solve by QR factorization. 
For pure cells,
 the last column of $M$ in \cref{eq:matrixM} is omitted, 
 and the square linear system $M^\top\boldsymbol{\alpha}=\boldsymbol{\rho}$
 is solved directly for the polynomial coefficients.
In both cases,
 we obtain a polynomial 
 $f(\bx)=\sum_{i=1}^N \alpha_i\psi_i(\bx)$
 by which we can evaluate point values of $\rho$
 at the preimages of nodes of Gauss formulas.

 \revise{
   Finally, we give some details
   on the stability of our polynomial reconstruction. 
  \begin{enumerate}[(a)]
  \item The total degree of the bi-variate polynomial is limited to 8
    to prevent wild oscillations of polynomial interpolation on uniform grid
    (the Runge phenomenon). 
  \item To avoid small volume fractions, 
    any cut cell that satisfies $\|\mC_{\bi}\| < \epsilon h^2$
    is merged into one of its adjacent cells
    so that the merged cell has a volume fraction
    no less than $\epsilon$. 
    Similarly,
    any interface cell with $\|\mS_{\bi}\| < \epsilon h$
    is merged into an adjacent cut cell
    so that the cut boundary has a length ratio
    no less than $\epsilon$. 
    In this work, 
    we set the user-specified merging threshold
    to $\epsilon = 0.1$, 
    which constraints the variation of volume fractions
    to one order of magnitude
    and guarantees good conditioning of the linear system
    for polynomial coefficients.
  \item The AI-aided algorithm for poised lattice generation 
    is originally developed for \emph{finite difference} formulations
    and also works well for \emph{finite volume} discretizations, 
    so long as each cell only contributes
    \emph{one} equation to the linear system.
    If we insist on $N$ equations 
    with an interface cell contributing two equations,
    one from the cell average and the other
    from the face average over the cut boundary,
    there might exist pathological cases that lead 
    to ill-conditioning or even singularity of the corresponding
    linear system. 
    It is also difficult to solve this problem analytically.
    Fortunately, a simple solution to this problem
    is to treat the equation from the cut boundary
    as one extra condition
    so that the over-determined linear system with $N+1$ equations
    contains a poised lattice of $N$ cells with good conditioning.
    Then QR factorization gives a well-conditioned solution
    of the least square problem.
  \item Apart from (c),
    additional points are sometimes appended into a poised lattice
    so that the condition number of the corresponding linear system
    is further lowered to a local minimum.
    In this type of least squares,
    the condition number is well under control
    via the number of additional points;
    see \cite[Fig. 8]{zhang2024PLG} for a few illustrations.
  \end{enumerate}
}

\section{Analysis}
\label{sec:analysis}
In this section, 
 we prove the convergence rates of the SLFV method. 
We start by examining Gauss formulas on curved quadrilaterals and triangles. 

\begin{lemma}
  \label{lem:accuracyAnalysisQuadrilateral}
  Denote by  
   $\mathbf{E}_i\coloneqq (E_{i,x},E_{i,y})$ 
   the $i$th edge of a curved quadrilateral or triangle
   ${\mathcal Q}\subset \mathbb{R}^2$.  
  Suppose that $E_{i,x}$ and $E_{i,y}$ 
   are both polynomials \mbox{$[-1,1]\mapsto \mathbb{R}$} of degree at most $q$.
  Then the quadrature formula in \cref{eq:Gauss-quadrature} is exact
   for any bi-variate polynomial
   \mbox{$g:{\mathcal Q}\to \mathbb{R}$} of degree at most $\kappa$ 
   in each variable
   provided that $\min(m_\xi,\ m_\eta) \ge \left\lceil
     \frac{(\kappa+2)q}{2}\right\rceil$
   holds for $m_\xi$ and $m_\eta$ in \cref{eq:Gauss-quadrature}.
\end{lemma}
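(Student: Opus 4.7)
The plan is to reduce the integral over $\mathcal Q$ to a canonical tensor-product integral on $[-1,1]^2$ via the isoparametric mapping $\bM$, and then to show that the transformed integrand has bi-degree within the exactness reach of tensor-product Gauss--Legendre quadrature. First I would pull the integral back through $\bM$:
\[
\iint_{\mathcal Q} g(\bx)\,\dif\bx
= \iint_{[-1,1]^2} g(\bM(\xi,\eta))\,|J(\xi,\eta)|\,\dif\xi\,\dif\eta,
\]
where $|J|$ is the (orientation-preserving) Jacobian determinant of $\bM$, and note that the rule \cref{eq:Gauss-quadrature} is precisely the tensor-product Gauss--Legendre rule with $m_\xi$ and $m_\eta$ nodes applied to the transformed integrand $F(\xi,\eta)\coloneqq g(\bM(\xi,\eta))\,|J(\xi,\eta)|$. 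Since the $m$-point Gauss--Legendre rule on $[-1,1]$ is exact for polynomials of degree at most $2m-1$, exactness of the tensor-product rule reduces to showing $\deg_\xi F,\,\deg_\eta F\le 2\min(m_\xi,m_\eta)-1$.

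The core is a bi-degree count for $\bM$ and then for $|J|$ and $F$. From \cref{eq:belndingFunctionMapping}, each summand of $\bM$ is of one of the forms $(1\pm\eta)\mathbf{E}_{1,3}(\xi)/2$, $(1\pm\xi)\mathbf{E}_{2,4}(\eta)/2$, or a bilinear correction $N_i(\xi,\eta)\bv_i$; since each $\mathbf{E}_i$ is a polynomial of degree $\le q$ in its argument, both components $M_x,M_y$ satisfy $\deg_\xi\le q$ and $\deg_\eta\le q$. A partial derivative in $\xi$ strictly lowers $\deg_\xi$ by one on the surviving monomials (pure-$\eta$ terms vanish), so $\partial_\xi\bM$ has bi-degree at most $(q-1,q)$; symmetrically $\partial_\eta\bM$ has bi-degree at most $(q,q-1)$. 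Cross-multiplying in $|J|=\partial_\xi M_x\,\partial_\eta M_y-\partial_\xi M_y\,\partial_\eta M_x$ then gives $\deg_\xi|J|,\,\deg_\eta|J|\le 2q-1$.

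Reading the hypothesis as total degree at most $\kappa$ (consistent with the $\Pi_q$ convention of \Cref{sec:reconstruction}), every monomial $x^i y^j$ of $g$ satisfies $i+j\le\kappa$, so $M_x^i M_y^j$ inherits $\deg_\xi,\deg_\eta\le q(i+j)\le q\kappa$ and $g(\bM)$ itself has $\deg_\xi,\deg_\eta\le q\kappa$. Multiplying by $|J|$ bumps each degree up by at most $2q-1$, yielding
\[
\deg_\xi F,\ \deg_\eta F\ \le\ q\kappa+(2q-1)\ =\ (\kappa+2)q-1,
\]
and the exactness requirement $2\min(m_\xi,m_\eta)-1\ge(\kappa+2)q-1$ produces exactly the stated threshold $\min(m_\xi,m_\eta)\ge\lceil(\kappa+2)q/2\rceil$.

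The main obstacle I anticipate is the degenerate triangle case \cref{eq:Mapping4Triangle-2}: although one edge map is replaced by a constant vertex, so that $\bM$ remains polynomial with the same bi-degree bounds and the algebraic count above passes through verbatim, one still has to justify the change of variables in the presence of the Jacobian singularity along the collapsed edge. The authors' own argument -- that all Gauss nodes lie strictly inside $[-1,1]^2$ so $|J|$ stays bounded away from zero at the quadrature points and the mapping remains valid there -- settles this. A minor bookkeeping point is that $|J|$ must be interpreted as the polynomial $J$ itself rather than its absolute value, which is legitimate because the orientation-preserving choice of $\bM$ makes $J$ strictly positive on the interior; with this in hand, the remainder of the argument is arithmetic.
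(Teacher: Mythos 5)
Your proof follows essentially the same route as the paper's: pull the integral back through $\bM$, bound the degree in each variable of $g(\bM)$ by $\kappa q$ and of $|J|$ by $2q-1$, and invoke the exactness of the tensor-product Gauss rule on the product. The only substantive difference is that you are more careful on two points the paper glosses over: you read the hypothesis on $g$ as a total-degree bound, which is actually \emph{needed} for the estimate $\deg g(\bM)\le \kappa q$ in each variable (the literal per-variable reading of the lemma would only give $2\kappa q$), and you explicitly address the sign of $J$ and the collapsed-edge degeneracy of the triangle map, the latter of which the paper treats separately in \Cref{sec:Gauss-quadrature-formula} rather than in the proof of \cref{lem:accuracyAnalysisQuadrilateral}.
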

\begin{proof}
    The variable substitution of $(x,y)=\mathbf{M}(\xi,\eta)$ in
    \cref{eq:Gauss-quadrature} yields 
    \begin{equation*}
      \iint_{{\mathcal Q}} g(x,y) \,\dif x \dif y
      = \int_{-1}^1\int_{-1}^1 
        g(\bM(\xi,\eta)) |J(\xi,\eta)|\,\dif\xi\dif\eta,
    \end{equation*}
     where $J(\xi,\eta)$ is the Jacobian matrix of $\bM(\xi,\eta)$. 
    Since $E_{i,x}(l)$ and $E_{i,y}(l)$ are both polynomials of degree at most $q$, 
     \cref{eq:belndingFunctionMapping} 
     and \cref{eq:Mapping4Triangle-2} imply that $\bM(\xi,\eta)$ is a polynomial 
     of degree at most $q$ in each variable. 
    Hence the composite function $g(\bM(\xi,\eta))$ is a polynomial 
     of degree at most $\kappa q$ in each variable
     and the determinant of the Jacobian matrix, $|J(\xi,\eta)|$,
     is a polynomial of degree at most $2q-1$ in each variable. 
    Thus the product $g(\bM(\xi,\eta))|J(\xi,\eta)|$ is a polynomial 
    of degree at most $(\kappa+2)q-1$ in each variable.
    Therefore, 
    for a one-dimensional Gauss formula to be exact, 
     it suffices to have
     $\min(m_\xi,\ m_\eta) \ge\left\lceil \frac{(\kappa+2)q}{2}\right\rceil$.
\end{proof}

\Cref{lem:accuracyAnalysisQuadrilateral}
is a key in analyzing the accuracy of the SLFV method.
\revise{
We also need 

\begin{lemma}
  \label{lem:well-posed}
  Consider the initial value problem (IVP)
  \begin{equation*}
    \mathbf{w}'(t)=\mathbf{f}(\mathbf{w}(t), t), \quad 
    \mathbf{w}(0)=\mathbf{w}_0,
  \end{equation*} 
   where $\mathbf{f}(\mathbf{w}, t)$ is Lipschitz continuous in $\mathbf{w}$ 
   and continuous in $t\in [0,T]$. 
  Let $\mathbf{v}(t)$ be the solution of the perturbed IVP over
  $[0,T]$, 
   \begin{equation*}
      \mathbf{v}'(t)=\mathbf{f}(\mathbf{v}(t), t) + \boldsymbol{\delta}(t), 
      \quad \mathbf{v}(0)=\mathbf{v}_0
      \coloneqq \mathbf{w}_0+\boldsymbol{\varepsilon}_0.
   \end{equation*}
  Suppose there exists a constant $\varepsilon>0$ such that 
   $\|\boldsymbol{\varepsilon}_0\|<\varepsilon$ 
   and $\|\boldsymbol{\delta}(t)\|<\varepsilon$, 
   then 
   \begin{equation*}
      \forall t\in[0, T], \quad 
      \|\mathbf{v}(t)-\mathbf{w}(t)\| \le 
      \left(1+Ct\right)\varepsilon, 
   \end{equation*}
   where $C\coloneqq 1+L\exp(LT)(1+T)$ 
   and $L$ is the Lipschitz constant of the RHS $\mathbf{f}$. 
\end{lemma}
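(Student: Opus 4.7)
The plan is a standard perturbed Gronwall argument. First I would introduce the error $\mathbf{e}(t) \coloneqq \mathbf{v}(t) - \mathbf{w}(t)$ and subtract the two IVPs to obtain
\begin{equation*}
  \mathbf{e}'(t) = \mathbf{f}(\mathbf{v}(t),t) - \mathbf{f}(\mathbf{w}(t),t) + \boldsymbol{\delta}(t),
  \qquad \mathbf{e}(0) = \boldsymbol{\varepsilon}_0.
\end{equation*}
Integrating from $0$ to $t$, taking norms, and invoking the Lipschitz condition on $\mathbf{f}$ with constant $L$ together with the hypotheses $\|\boldsymbol{\varepsilon}_0\| < \varepsilon$ and $\|\boldsymbol{\delta}(s)\| < \varepsilon$, I obtain the linear integral inequality
\begin{equation*}
  \|\mathbf{e}(t)\| \,\le\, \varepsilon + \varepsilon t + L\int_0^t \|\mathbf{e}(s)\|\,\dif s
  \,=\, (1+t)\varepsilon + L\int_0^t \|\mathbf{e}(s)\|\,\dif s.
\end{equation*}

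Next I would apply the integral form of Gronwall's inequality: whenever $u(t) \le a(t) + L\int_0^t u(s)\,\dif s$ with $a$ non-negative, one has $u(t) \le a(t) + L\int_0^t a(s)\exp(L(t-s))\,\dif s$. Taking $a(t)=(1+t)\varepsilon$ this yields
\begin{equation*}
  \|\mathbf{e}(t)\| \,\le\, (1+t)\varepsilon + L\varepsilon\int_0^t (1+s)\exp(L(t-s))\,\dif s.
\end{equation*}

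Finally, I bound the remaining integral with the crude estimates $1+s \le 1+T$ and $\exp(L(t-s)) \le \exp(LT)$ valid for $s \in [0,t]$, $t \le T$, which gives
\begin{equation*}
  L\int_0^t (1+s)\exp(L(t-s))\,\dif s \,\le\, L(1+T)\exp(LT)\cdot t.
\end{equation*}
Substituting and factoring out $\varepsilon$ yields $\|\mathbf{e}(t)\| \le \varepsilon\bigl[1 + t\bigl(1 + L(1+T)\exp(LT)\bigr)\bigr] = (1+Ct)\varepsilon$, which is the claimed bound.

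No step is really an obstacle here: existence and uniqueness of $\mathbf{v}, \mathbf{w}$ on $[0,T]$ follow from the Lipschitz hypothesis, and the estimate is a textbook perturbation lemma. The only minor care is in choosing the \emph{linear-in-$t$} envelope (rather than the tighter $\varepsilon(1+T)\exp(Lt)$ one obtains by treating $a(t)$ as a constant upper bound) so that the conclusion comes out in the advertised affine form with the exact constant $C = 1 + L\exp(LT)(1+T)$; using the crude $s \le T$ bound inside the integral is precisely what produces the extra factor of $t$ and gives this form.
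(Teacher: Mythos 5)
Your proposal is correct and follows essentially the same route as the paper: the same integral representation of the error, the same intermediate inequality $\|\mathbf{e}(t)\|\le(1+t)\varepsilon+L\int_0^t\|\mathbf{e}(s)\|\,\dif s$, and the same final crude bound producing $C=1+L\exp(LT)(1+T)$. The only cosmetic difference is that you invoke the integral form of Gronwall's inequality as a known fact, whereas the paper re-derives that step from scratch via the auxiliary function $g(\tau)=\exp(-L\tau)\int_0^\tau L\|\mathbf{v}(r)-\mathbf{w}(r)\|\,\dif r$.
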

\begin{proof}
  Since $\mathbf{v}(t)-\mathbf{w}(t)
    = \mathbf{v}_0-\mathbf{w}_0 + 
    \int_0^t [\mathbf{f}(\mathbf{v}(\tau), \tau) 
    - \mathbf{f}(\mathbf{w}(\tau), \tau) 
    + \boldsymbol{\delta}(\tau)]\,\dif \tau$,
    we have,
    from the triangular inequality 
    and the Lipschitz condition, 
  \begin{equation}
    \label{eq:perturbed_error_init}
     \begin{array}{l}
    \|\mathbf{v}(t) - \mathbf{w}(t)\|
    \le (1+t)\varepsilon + \int_0^t L\|\mathbf{v}(\tau)-\mathbf{w}(\tau)\|
        \,\dif \tau. 
     \end{array}
   \end{equation}
   
  To estimate the integral, 
   we define a function $g:[0,T]\rightarrow \mathbb{R}$, 
   \begin{equation*}
     \begin{array}{l}
       g(\tau) \coloneqq \exp(-L\tau)\int_0^\tau L\|\mathbf{v}(r)-\mathbf{w}(r)\|\,\dif r,
     \end{array}
  \end{equation*}
  so that
  $g'(\tau)=L\exp(-L\tau)\left(\|\mathbf{v}(\tau) - \mathbf{w}(\tau)\| 
      - \int_0^\tau L\|\mathbf{v}(r)-\mathbf{w}(r)\|\,\dif r\right)$.
  Since $g(0)=0$, we have $g(t) = g(t) - g(0) = \int_0^t g'(\tau)\,\dif \tau$.
   It follows that
   \begin{equation*}
     \begin{split}
       g(t)&=\exp(-L t)\int_0^t L\|\mathbf{v}(r) - \mathbf{w}(r)\|\,\dif r 
       \\ 
           &= \int_0^t L\exp(-L\tau)\left(\|\mathbf{v}(\tau) - \mathbf{w}(\tau)\| 
             - \int_0^\tau L\|\mathbf{v}(r) - \mathbf{w}(r)\|\,\dif r\right)\,\dif \tau\\
           &\le \int_0^t L\exp(-L\tau)(1+\tau)\varepsilon\,\dif \tau, 
     \end{split}
   \end{equation*}
   where the inequality follows from \cref{eq:perturbed_error_init}. 
   Multiply $\exp(L t)$ to the first and third lines
   and we have
   $\int_0^t L\|\mathbf{v}(\tau)-\mathbf{w}(\tau)\|\,\dif \tau 
   \le \int_0^t L\exp(L(t-\tau))(1+\tau)\varepsilon\,\dif \tau$,
   which, together with \cref{eq:perturbed_error_init}
   and the integral mean value theorem, yields
   \begin{displaymath}
     \begin{array}{l}
    \|\mathbf{v}(t)-\mathbf{w}(t)\|
    \le (1+t)\varepsilon + \int_0^t L\exp(L(t-\tau))(1+\tau)\varepsilon\,\dif \tau
    \le \left(1+Ct\right)\varepsilon.
     \end{array}
  \end{displaymath}
\end{proof} 
}
 
\begin{theorem}
  \label{thm:accuracyOfSLFV}
  Suppose the solution of \cref{eq:advection_control_equation}
   is sufficiently smooth
   and the domain $\Omega\in\mathbb{Y}$ is
   approximated by a Yin set $\Omega_c\in\mathbb{Y}_c$, 
   whose boundary curves consist of cubic splines
   \revise{of which the maximal chordal length
   of adjacent breakpoints is denoted by $h_L$}. 
  Then, \revise{in the max-norm,} the SLFV method in \Cref{def:SLFV} 
   \revise{with $k=O(h)$} is 
  \begin{itemize}
  \item fourth-order convergent both in time and in space 
     if \revise{we choose \mbox{$h_L = O(h)$},} 
     a 4th-order RK method
      in \cref{eq:RK}, \cref{eq:RKfromIntersection},
      and (SLFV-2), 
     a 5th-order polynomial reconstruction in (SLFV-4), 
     and Gauss formulas in \cref{eq:Gauss-quadrature} with 
     $m_\xi,\ m_\eta \ge 3$ for pure cells and
     $m_\xi,\ m_\eta \ge 9$ for interface cells.
   \item sixth-order convergent both in time and in space
     if \revise{we choose $h_L = O\left(h^{\frac{3}{2}}\right)$,}  
     a 6th-order RK method
      in \cref{eq:RK}, \cref{eq:RKfromIntersection},
      and (SLFV-2), 
     a 7th-order polynomial reconstruction in (SLFV-4), 
     and Gauss formulas in \cref{eq:Gauss-quadrature} with 
     $m_\xi,\ m_\eta \ge 4$ for pure cells and 
     $m_\xi,\ m_\eta \ge 12$ for interface cells.
   \item eighth-order convergent both in time and in space
     if \revise{we choose $h_L = O(h^2)$,} 
     an 8th-order RK method
      in \cref{eq:RK}, \cref{eq:RKfromIntersection},
      and (SLFV-2), 
     a 9th-order polynomial reconstruction in (SLFV-4), 
     and Gauss formulas in \cref{eq:Gauss-quadrature} with 
     $m_\xi,\ m_\eta \ge 5$ for pure cells and 
     $m_\xi,\ m_\eta \ge 15$ for interface cells.
    \end{itemize}
\end{theorem}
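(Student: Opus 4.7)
The plan is to establish per-time-step local error bounds for each of the five substeps in \Cref{def:SLFV}, then propagate these local errors to the global max-norm error over $[0,T]$ via a discrete Gronwall argument that exploits $k = O(h)$. At time level $t_{n+1}$, I would write the exact target as the average over $\mC_\bi$ of the characteristic representation
\begin{equation*}
  \rho(\bx, t_{n+1}) = \rho\bigl(\phi_{t_{n+1}}^{-k}(\bx), t_n\bigr)
    + \int_{t_n}^{t_{n+1}} S\bigl(\phi_{t_{n+1}}^{\tau-t_{n+1}}(\bx), \tau\bigr)\,\dif\tau,
\end{equation*}
then subtract the SLFV formula $\avg{\rho}^{n+1}_\bi = I_\bi(\tilde{\rho}^{n+1})$ and split the resulting difference into five separable contributions corresponding to the five steps (SLFV-1)--(SLFV-5).

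These contributions are bounded independently. \emph{(a) Geometric error}: replacing $\partial\Omega$ by the cubic-spline $\partial\Omega_c$ introduces a pointwise $O(h_L^4)$ perturbation that propagates through the boundary data, the intersection $(\bx_{\bi,m}^*, t^*)$ computed in \Cref{sec:penetration}, and the cut-cell volumes (which remain well-conditioned thanks to the merging threshold $\epsilon$); the three choices $h_L = O(h), O(h^{3/2}), O(h^2)$ produce exactly $O(h^4), O(h^6), O(h^8)$. \emph{(b) Quadrature error}: \Cref{lem:accuracyAnalysisQuadrilateral} with $q=3$ for curved interface cells and $q=1$ for rectangular pure cells, applied with reconstruction polynomials of degree $\kappa = p$ in each variable, yields the required node counts $m_\xi, m_\eta \ge \lceil (p+2)q/2 \rceil$ that render the quadrature \emph{exact} on the reconstruction and match precisely the values $9,12,15$ (interface) and $3,4,5$ (pure) in the statement. \emph{(c) Preimage error}: the RK solve of \cref{eq:ODE-flow-map} in (SLFV-2) is $O(k^{p+1})$ per step by classical RK theory for Lipschitz velocity fields. \emph{(d) Reconstruction error}: a reconstruction of order $p+1$ gives a pointwise $O(h^{p+1})$ bound on $f - \rho(\cdot, t_n)$ at the true preimage, and evaluating $f$ at the approximate preimage $\overleftarrow{\bx_{\bi,m}}$ instead incurs only a Lipschitz-weighted correction absorbed by \Cref{lem:well-posed} applied to the backward pathline. \emph{(e) RK source-integration error}: \cref{eq:RK} and \cref{eq:RKfromIntersection} contribute $O(k^{p+1})$ per step by the same classical analysis. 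Summing the local $O(h^{p+1})$ errors over $N_T = O(1/h)$ time steps and applying discrete Gronwall (using the $L^\infty$-stability of the SLFV update, which is a linear combination of cell averages with coefficients summing to $1+O(k)$ because of the source) yields the global rate $O(h^p)$.

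The main obstacle I expect to face lies in the careful treatment of two entangled issues that must be handled uniformly in the order parameter $p\in\{4,6,8\}$. First, one must verify that the cell-merging threshold $\epsilon$ keeps the Jacobian of the blending-function mapping \cref{eq:belndingFunctionMapping} (and its triangular specialization \cref{eq:Mapping4Triangle-2}) uniformly non-degenerate across all interface cells, so that the hidden constants in the Gauss formula error are independent of $h$. Second, in the penetration branch, one must show that the Newton iteration for $(s^*, t^*)$ returns an intersection accurate to $O(k^{p+1})$ under the cubic-spline boundary, and that the shortened step $k^* \le k$ in \cref{eq:RKfromIntersection} preserves the full RK order despite starting from the perturbed boundary point $\bx_{\bi,m}^*$---the latter being a direct invocation of \Cref{lem:well-posed} with perturbation size $\varepsilon = O(h_L^4) + O(k^{p+1})$. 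These are routine but delicate variations of classical arguments, and their unification across the three order cases constitutes the bulk of the technical work.
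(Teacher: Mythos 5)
Your proposal follows essentially the same route as the paper: a per-step local error decomposition (spline geometry, Gauss quadrature via \cref{lem:accuracyAnalysisQuadrilateral} with $q=3$ on interface cells and $q=1$ on pure cells, RK backtracking, polynomial reconstruction, and the Newton-intersected penetration branch controlled by \cref{lem:well-posed}), propagated to a global $O(h^p)$ bound by a Gronwall/induction argument over $O(1/h)$ steps with $k=O(h)$. The only differences are cosmetic---the paper organizes the recurrence as a pointwise max-norm estimate $e_{n+1}\le(1+Ck)e_n+O(h^{p+1}+k^{p+1})$ on the coupled ODE system for $(\rho,\mathbf{x})$ and derives the $(1+Ck)$ factor from \cref{lem:well-posed} rather than from quadrature weights summing to one, whereas you phrase the quadrature requirement as exactness on the reconstruction instead of an $O(h^{p+1})$ truncation error for the exact solution; the node counts and conclusions are identical.
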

\begin{proof}
  The approximation of $\Omega$ with $\Omega_c$
   is fourth-, sixth-, and eighth-order accurate 
   when $h_L=O(h)$, $O\left(h^{\frac{3}{2}}\right)$, and $O(h^2)$, 
   respectively \cite{hu2025ARMS}. 
  Here we only give the proof for the fourth-order case, 
   as those for other cases are similar.

\revise{ 
  Define a max-norm of pointwise errors of the SLFV method in \cref{def:SLFV}
  as 
  \begin{equation}
    \label{eq:maxNormPointwise}
    e_n \coloneqq 
    \sup\nolimits_{\bx\in \Omega}\left|\tilde{\rho}^{n}(\bx)-\rho(\bx, t_{n})\right|.
  \end{equation}

  Suppose there exists a constant $C$ independent on $k$ such that, 
  \begin{equation}
    \label{eq:error_accumulation}
    \forall n\in \mathbb{N}, \quad 
    e_{n+1} \le (1+Ck)e_n + O(h^5+k^5).
  \end{equation}
  Then a straightforward induction
  and the identity $\lim_{n\rightarrow \infty}(1+\frac{C}{n})^n=\exp(C)$ yield 
  \begin{equation}
    \label{eq:interErrorEstimate}
    \begin{array}{l}
    e_{n} \le \exp(CT)e_0 + O\left(\frac{1}{k}h^5+k^4\right) 
    = O\left(\frac{1}{k}h^5+k^4\right),
    \end{array}
  \end{equation}
  where we have assumed that
  the initial error is $e_0=O\left(\frac{1}{k}h^5+k^4\right)$. 

  For any control volume $\bi$, the (cell-averaged) solution error satisfies
  \begin{equation*}
    \begin{split}
    \left|\avg{\rho}_{\bi}^n - \avg{\rho(\bx, t_n)}_{\bi}\right|
    =  &\left|I_{\bi}(\tilde{\rho}^n) - \avg{\rho(\bx, t_n)}_{\bi}\right|\\
    \le&\left|I_{\bi}(\tilde{\rho}^n) - I_{\bi}(\rho(\bx, t_n))\right|
       +\left|I_{\bi}(\rho(\bx, t_n)) - \avg{\rho(\bx, t_n)}_{\bi}\right|, 
    \end{split}
  \end{equation*}
  where the equality follows from (SLFV-5);
  in the second line, 
  the first RHS term is a linear combination
  of pointwise errors at the quadrature points in cell $\bi$ 
  and the second the truncation error of the quadrature formula.
  For any pure cell, 
  the quadrature is performed
  by recursively invoking one-dimensional Gauss formulas, yielding
  $\left|I_{\bi}(\rho(\bx, t_n))
    - \avg{\rho(\bx, t_n)}_{\bi}\right|=O(h^5)$, 
  which also holds for an interface cell
  because of \cref{lem:accuracyAnalysisQuadrilateral}
  and the fact of
  $q=3$ and $\kappa=4$ giving $\left\lceil\frac{(\kappa+2)q}{2}\right\rceil=9$.
  Then, $\sum\nolimits_{m=1}^{M}\omega_{\bi,m}=1$
  and \cref{eq:interErrorEstimate} imply 
  \begin{equation*}
    \begin{split}
    \left|\avg{\rho}_{\bi}^n - \avg{\rho(\bx, t_n)}_{\bi}\right|
    \le&\sum\nolimits_{m=1}^{M}\omega_{\bi,m}\left|
          \tilde{\rho}^n(\bx_{\bi,m}) - \rho(\bx_{\bi,m},t_n)
        \right| + O(h^5)\\
    \le& e_n\sum\nolimits_{m=1}^{M}\omega_{\bi,m} + O(h^5)
    =   O\left(\frac{1}{k}h^5+k^4\right)
    \end{split}
  \end{equation*}
  and the fourth-order convergence of the SLFV method
  follows from $k=O(h)$.

  We now prove the recurrence relation \cref{eq:error_accumulation}. 
  Denote by $\cT$ and $\cT_k$ the exact and discrete operator,
  respectively, for solving the ODE system
  \begin{equation}
    \label{eq:coupledODE}
    \frac{\dif}{\dif t}
    \begin{bmatrix}
      \rho \\ \mathbf{x} 
    \end{bmatrix}
    =
    \begin{bmatrix}
      S \\ \mathbf{u}
    \end{bmatrix}(\mathbf{x},t), 
  \end{equation}
  which is the combination of \cref{eq:lagrangian_derivative} and \cref{eq:ODE-flow-map}.
  Accordingly, denote by $\overleftarrow{\bx}^*\coloneqq \phi_{t_{n+1}}^{-k}(\bx)$
  and $\overleftarrow{\bx}$
  the exact and approximate preimage of $\bx$, respectively. 

  For any quadrature point $\bx$ with $\overleftarrow{\bx}\in\Omega$, 
  we adopt the shorthand notation
  \mbox{$\mathbf{v}\coloneqq\left[f(\overleftarrow{\bx}),
    \overleftarrow{\bx}\right]^\top$}, 
  $\mathbf{w}\coloneqq\left[\rho(\overleftarrow{\bx}^*, t_n), 
    \overleftarrow{\bx}^*\right]^\top$ 
  and deduce, from \cref{def:SLFV}, 
  \begin{equation}
    \label{eq:pointwiseErrorCase1}
    \begin{split}
      &\left|\tilde{\rho}^{n+1}(\bx) - \rho(\bx, t_{n+1})\right|
      = \left|\cT_k \mathbf{v}
        - \cT \mathbf{w} \right|
      \le\left|\cT_k\mathbf{v} - \cT \mathbf{v} \right|
      +\left|\cT \mathbf{v} - \cT \mathbf{w} \right|
      \\
      =&\left|\cT \mathbf{v} - \cT \mathbf{w} \right| + O(k^5)
      \le (1+Ck) \left\|\mathbf{v}-\mathbf{w} \right\| + O(k^5)
      \\
      =& (1+Ck) \left |
           f(\overleftarrow{\bx}) 
           - \rho(\overleftarrow{\bx}^*, t_n) 
         \right| + O(k^5)
      \le (1+Ck) e_n + O(h^5 + k^5), 
    \end{split}
  \end{equation}
   where $f$ is the local bi-variate polynomial 
   described in \Cref{sec:reconstruction}. 
   In \cref{eq:pointwiseErrorCase1},
   the first step follows from (SLFV-4), 
   the second from the triangular inequality, 
   the third and the fifth from the 4th-order accuracy of the RK method 
   in $\cT_k$,
   the fourth from \cref{lem:well-posed},
   and the last from
   \begin{equation*}
     \begin{split}
       \left|f(\overleftarrow{\bx}) 
         - \rho(\overleftarrow{\bx}^*, t_n)\right|
       \le&\left|f(\overleftarrow{\bx})
         - \tilde{\rho}^n(\overleftarrow{\bx})\right|
       + \left|\tilde{\rho}^n(\overleftarrow{\bx})
         - \rho(\overleftarrow{\bx}, t_n)\right|
       + \left|\rho(\overleftarrow{\bx}, t_n)
         - \rho(\overleftarrow{\bx}^*, t_n)\right|
       \\
       =& \left|\tilde{\rho}^n(\overleftarrow{\bx})
         - \rho(\overleftarrow{\bx}^*, t_n)\right| + O(h^5+k^5)
       \le e_n + O(h^5 + k^5),
     \end{split}
   \end{equation*}
  where the first inequality follows from the triangular inequality, 
  the equality from the 5th-order reconstruction in space
  and the 4th-order accuracy of time integration,
  and the last inequality from \cref{eq:maxNormPointwise}. 

  For a quadrature point $\bx$ with $\overleftarrow{\bx}\not\in\Omega$, 
   we calculate $\tilde{\rho}^{n+1}(\bx)$ by \cref{eq:RKfromIntersection}.
  As $k\rightarrow 0$, 
   the Newton iteration in \Cref{sec:penetration} must converge. 
  Denote by $\tilde{\bx}$ and $\tilde{t}$ 
  the calculated intersection point and time, respectively, 
  adopt the shorthand notation
  \mbox{$\mathbf{v}_{\times}\coloneqq\left[\rho_{\mathrm{bc}}(\tilde{\bx}, \tilde{t}), 
          \tilde{\bx}\right]^\top$}, 
  $\mathbf{w}_{\times}\coloneqq\left[\rho_{\mathrm{bc}}(\bx^*, t^* ), 
          \bx^*\right]^\top$, 
        and we have
  \begin{equation}
    \label{eq:pointwiseErrorCase2}
     \begin{split}
       &\left|\tilde{\rho}^{n+1}(\bx) - \rho(\bx, t_{n+1}) \right|
       =\left|
        \cT_k \mathbf{v}_{\times}
        - \cT \mathbf{w}_{\times}
      \right|
      \le \left|\cT_k \mathbf{v}_{\times}
        - \cT \mathbf{v}_{\times}
      \right|
      +\left|\cT \mathbf{v}_{\times}
        - \cT \mathbf{w}_{\times}
      \right|
      \\
      =&\left|\cT \mathbf{v}_{\times}
        - \cT \mathbf{w}_{\times} \right| + O(k^5)
      \le (1+Ck) \left\| \mathbf{v}_{\times}- \mathbf{w}_{\times}\right\| + O(k^5)
      = O(k^5), 
     \end{split}
   \end{equation}
   where the first step follows from \cref{eq:RKfromIntersection}, 
  and the second from the triangular inequality,
  the third from the 4th-order accuracy of time integration,
  the fourth from \cref{lem:well-posed},
  and the last from the smoothness of $\rho_{\mathrm{bc}}$
  and $\|[\tilde{\bx},\tilde{t}]^\top
    -[\bx^*, t^*]^\top\| = O(k^5)$.

  At any non-quadrature location $\bx$,
  the numerical result of SLFV is defined as
  a linear combination of those at quadrature points,
  with the weights depending only on the Gauss quadrature formula
  and the local bi-variate polynomial in (SLFV-4).
  Thus 
  $\left|\tilde{\rho}^{n+1}(\bx) - \rho(\bx, t_{n+1}) \right|$
  also satisfies \cref{eq:pointwiseErrorCase1}
  or \cref{eq:pointwiseErrorCase2}.
  Therefore \cref{eq:error_accumulation} holds. 
}
\end{proof}  

\revise{
  In the above proof, 
  the ODE system \cref{eq:coupledODE} numerically
  solved by the SLFV method is linear.
Nonetheless,
 the analysis based on the recurrence relation \cref{eq:error_accumulation}
 may also apply to some nonlinear problems
 such as the Vlasov-Poisson system; 
 see \cite{Besse2008ConvergenceOC}.
}

The analysis in this section answers (Q-\ref{Q-4}).

\section{Tests}
\label{sec:numerical-tests}
In this section, 
 we test the proposed SLFV method 
 with a wide range of benchmark problems
 to demonstrate its high-order accuracy and good conditioning,
 its effectiveness for incoming penetration conditions, 
 and its capability of handling irregular domains 
 with arbitrarily complex topology and geometry. 
In \Cref{table:variousConditionsForTests},
 we summarize various configurations of these tests, 
 in which we employ the classical fourth-order RK method, 
 Verner's sixth-order RK method \cite{verner1978explicit} 
 and Dormand and Prince's eighth-order RK method
 \cite{prince1981high}
 to test the fourth-, sixth-, and eighth-order SLFV methods,
 respectively.

\begin{table}  
  \centering
   \caption{Overview of test cases in \Cref{sec:numerical-tests}. 
    The benchmark problems in the first two subsections
    are meant to check convergence rates
    of the proposed SLFV method on regular or irregular domains,
    for solenoidal or non-solenoidal velocity fields,
    and with penetration or no-penetration conditions.
    The test in \Cref{sec:penetr-veloc-bound-1}
     focuses on the accuracy comparison of the SLFV method
     with the EL-RK-FV method
     by Nakao, Chen, and Qiu \cite{nakao2022eulerian},
    those in \Cref{sec:penetr-veloc-bound-4}
    demonstrate the capability of SLFV
    in tackling practical applications
    such as chaotic mixing,
    and those in \Cref{sec:penetr-veloc-bound-5}
    confirm the applicability of SLFV
    to domains with complex topology and geometry.
  }
  \label{table:variousConditionsForTests}
  \begin{tabular}{c|c|c|c|c}
    \hline
    Subsection & 
    \makecell{zero source \\ term} & 
    \makecell{regular \\ domain} & 
    \makecell{incoming \\ penetration} & 
    \makecell{solenoidal \\ velocity}\\
    \hline 
\ref{sec:no-penetr-veloc} & yes & yes & no & yes   \\
    \hline
\ref{sec:penetr-veloc-bound-2} &  no  & no  & yes & no \\
    \hline
\ref{sec:penetr-veloc-bound-1} & yes & yes & yes &  yes \\
    \hline
\ref{sec:penetr-veloc-bound-4} &  yes  & yes & yes  & yes \\
    \hline
\ref{sec:penetr-veloc-bound-5} &  yes  &  no &  yes  & yes \\
    \hline        
  \end{tabular}
\end{table}

The $L^p$ norm of a scalar function
 $g: \Omega\to \mathbb{R}$ is defined as
\begin{equation*}
  \|g\|_p = 
  \left\{
    \begin{aligned}
      &\left( 
        \sum\limits_{\mC_\bi\subset \Omega} 
        \|\mC_{\bi}\| \cdot 
        \left|\avg{g}_\bi\right|^p 
      \right)^{\frac{1}{p}} \quad 
      && \mathrm{if}\ p=1,2; \\
      & \max\limits_{\mC_\bi\subset \Omega} |\avg{g}_\bi| \quad 
      && \mathrm{if}\ p=\infty, 
    \end{aligned}
  \right.
\end{equation*}
where the cut cell $\mC_\bi$ and the cell average $\avg{g}_\bi$
are given in \cref{eq:cutCellAndBdry}
and \cref{eq:cellAndFaceAverage}, respectively. 

Denote by $\rho_h$ the numerical solution on a grid of size $h$
 and $r$ the grid refinement ratio. 
If the analytic solution $\rho$ is available,
 the error of $\rho_h$
 is measured by $E_h \coloneqq \rho_{h} - \rho$; 
 otherwise Richardson extrapolation is employed, 
 i.e., $E_h \coloneqq \rho_{h} - \rho_{\frac{h}{r}}$.
Then the convergence rate is given by
 $\kappa \coloneqq \log_r \frac{\|E_{rh}\|_p}{\|E_{h}\|_p}$.

\subsection{A solenoidal velocity
  field on a regular no-penetration domain} 
\label{sec:no-penetr-veloc}
On the unit box $\Omega = (0, 1)^2$,  
we set $S=0$ and 
\begin{equation*}
  \bu(x,y,t) = 
  \begin{pmatrix}
    -\sin^2(\pi x)\sin(\pi y)\cos(\pi y)\\
    \sin(\pi x)\cos(\pi x) \sin^2(\pi y)
  \end{pmatrix}\sin(t)
\end{equation*}
in \cref{eq:advection_control_equation}
and choose the initial and boundary conditions of $\rho$ to be
\begin{equation*}
  \rho_{\text{init}}(x,y) = 
  \rho_{\text{bc}}(x, y,t) = 
  x+y.
\end{equation*}

We solve the advection equation \cref{eq:advection_control_equation}
 to the final time $T=1$ with the time step size as $k = 8h$. 
Since no analytic solution is available,
we calculate errors and convergence rates of SLFV
 via Richardson extrapolation
 and present them in 
 \cref{table:regularNopenetrationIncompressibleZeroSourceTerm},
 which clearly demonstrate
 the fourth- and sixth-order convergence rates of SLFV. 

 \begin{table}
  \centering
  \caption{Errors and convergence rates of 
    the $\kappa$-th order SLFV method 
    for the test in \Cref{sec:no-penetr-veloc}.
    The errors are calculated by Richardson extrapolation.
  }
  \label{table:regularNopenetrationIncompressibleZeroSourceTerm}
  \begin{tabular}{c|ccccccccc}
    \hline
    $\kappa$ && $\frac{1}{32}$--$\frac{1}{64}$ & rate & 
                $\frac{1}{64}$--$\frac{1}{128}$ & rate & 
                $\frac{1}{128}$--$\frac{1}{256}$ & rate & 
                $\frac{1}{256}$--$\frac{1}{512}$\\
    \hline\hline
    \multirow{3}{*}{4} & $L^\infty$ & 
    1.16e-04 & 3.98 &7.36e-06 & 4.01 & 4.57e-07 & 4.09 & 2.68e-08 \\
    ~ & $L^1$ & 
    2.95e-05 & 4.01 & 1.83e-06 & 4.02 & 1.13e-07 & 4.09 & 6.63e-09 \\
    ~ & $L^2$ & 
    4.65e-05 & 4.01 & 2.89e-06 & 4.01 & 1.79e-07 & 4.09 & 1.05e-08\\
    \hline\hline
    \multirow{3}{*}{6} & $L^\infty$ & 
    1.44e-07 & 5.68 & 2.81e-09 & 5.77 & 5.15e-11 & 5.93 & 8.43e-13 \\
    ~ & $L^1$ & 
    3.08e-08 & 5.84 & 5.37e-10 & 5.99 & 8.42e-12 & 6.04 & 1.28e-13 \\
    ~ & $L^2$ & 
    4.65e-08 & 5.83 & 8.19e-10 & 5.91 & 1.36e-11 & 5.98 & 2.16e-13 \\
    \hline        
  \end{tabular}
\end{table}

\revise{
 Due to $S=0$ and $\nabla\cdot\mathbf{u}=0$,
 \cref{eq:advection_control_equation} 
 is equivalent to the scalar conservation law
 $\frac{\partial \rho}{\partial t} + \nabla\cdot(\rho\mathbf{u})=0$. 
We define the relative error of mass conservation as
\begin{equation}
  \label{eq:relation-mass-error}
  E_{\textnormal{mass}}^{\textnormal{rel}} (t_n) \coloneqq 
  \frac{\iint_{\Omega} \rho(x, y, t_n) \ \dif x \dif y 
    - \iint_{\Omega} \rho_{\textnormal{init}}(x, y) \ \dif x \dif y}{
    \iint_{\Omega} \rho_{\textnormal{init}}(x, y) \ \dif x \dif y}.
\end{equation}
The evolution of $E_{\textnormal{mass}}^{\textnormal{rel}}$
 for the fourth-order SLFV method 
 with $h = \frac{1}{256}$ is shown in \cref{fig:massErr_t40}, 
 where 
 $\max_{t_n}\left|E_{\textnormal{mass}}^{\textnormal{rel}}(t_n)\right|$
 is about $5\times 10^{-11}$,
 which is much smaller than $4.57\times 10^{-7}$,
 the $L^{\infty}$-norm of the solution error in 
 \cref{table:regularNopenetrationIncompressibleZeroSourceTerm}.
 Since the error of mass conservation is always
 smaller than the solution error,
 a high-order method tends to accurately conserve the total mass.
}

\begin{figure}
  \centering
  \includegraphics[width=0.65\textwidth]{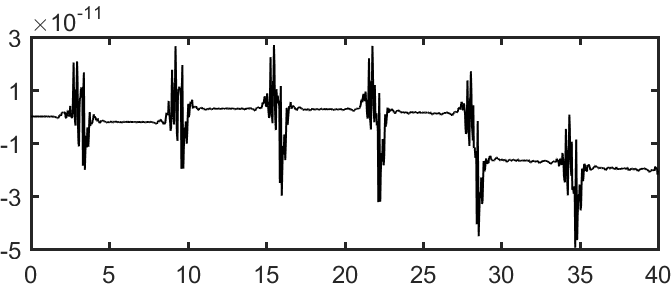}
  \vspace{-11pt}
  \caption{\revise{The evolution of the relative error of mass conservation
      for the fourth-order SLFV method
      in solving the test in \Cref{sec:no-penetr-veloc}
      on a grid of $h=\frac{1}{256}$
      within the time interval $[0, 40]$. 
    The horizontal and vertical axes represent the time
    and the relative error in \cref{eq:relation-mass-error},
    respectively.
    }}
  \label{fig:massErr_t40}
\end{figure}

\subsection{A non-solenoidal velocity field
  on an irregular penetration domain}
\label{sec:penetr-veloc-bound-2}

In this test we set $\Omega$ to a trapezoid 
 with its vertices at $(0.5, 0)$, $(2,0)$, $(2,2)$, and $(0,2)$. 
The velocity and the exact solution of $\rho$ are 
\begin{equation*}
  \bu(x, y, t) = 
  \begin{pmatrix}
    x^2 + y^2 + t\\
      x + y -2t
  \end{pmatrix}; \quad 
  \rho(x, y, t) = \sin(x + y +t), 
\end{equation*}
 for which the advection equation
 \cref{eq:advection_control_equation}
 is closed by the source term
\begin{equation*}
  S = \cos(x+y+t)(1+x^2+y^2+x+y-t).
\end{equation*}

We solve \cref{eq:advection_control_equation}
 to the final time $T=1$ with $k = 8h$. 
Errors and convergence rates are presented in 
\cref{table:trapezoidalPenetrationCompressibleNonZeroSourceTerm}, 
 confirming the fourth- and sixth-order convergence rates
 of SLFV over the irregular domain. 

 \begin{table}
  \centering
  \caption{Errors and convergence rates of 
    the $\kappa$-th order SLFV method 
    for the test in \Cref{sec:penetr-veloc-bound-2}.}  
  \label{table:trapezoidalPenetrationCompressibleNonZeroSourceTerm}
  \begin{tabular}{c|ccccccccc}
    \hline
    $\kappa$ && $h=\frac{1}{64}$ & rate & $h=\frac{1}{128}$ & rate & 
                $h=\frac{1}{256}$ & rate & $h=\frac{1}{512}$\\
    \hline\hline
    \multirow{3}{*}{4} & $L^\infty$ & 
    1.07e-03 & 4.14 & 6.03e-05 & 4.03 & 3.68e-06 & 4.01 & 2.29e-07 \\
    ~ & $L^1$ & 
    7.21e-04 & 4.05 & 4.35e-05 & 4.02 & 2.68e-06 & 4.01 & 1.67e-07 \\
    ~ & $L^2$ & 
    6.55e-04 & 4.10 & 3.81e-05 & 4.03 & 2.33e-06 & 4.01 & 1.45e-07 \\
    \hline\hline
    \multirow{3}{*}{6} & $L^\infty$ & 
    1.79e-05 & 5.45 & 4.08e-07 & 5.86 & 7.01e-09 & 5.96 & 1.13e-10 \\
    ~ & $L^1$ & 
    2.34e-06 & 5.68 & 4.57e-08 & 5.90 & 7.68e-10 & 5.97 & 1.22e-11 \\
    ~ & $L^2$ & 
    2.91e-06 & 5.43 & 6.74e-08 & 5.84 & 1.17e-09 & 5.96 & 1.89e-11 \\
    \hline         
  \end{tabular}
\end{table}

\subsection{Solid-body rotation 
  on a regular periodic penetration domain}
\label{sec:penetr-veloc-bound-1}
On the domain $\Omega = (-\pi, \pi)^2$, 
 we set the velocity and the exact solution of $\rho$ to be
\begin{equation*}  
    \bu(x, y, t) = 
    \begin{pmatrix}
      -y \\
      x
    \end{pmatrix}; \quad 
    \rho(x,y,t) = \exp(-3(x^2+y^2)), 
\end{equation*}
 for which the equation \cref{eq:advection_control_equation}
 is closed with the zero source term $S=0$. 
The boundary conditions of $\rho$ are set to be periodic
 and the final time is $T = 0.5$. 
This test is exactly the same as
 that in \cite[Example 4.5]{nakao2022eulerian}
 so that it is meaningful to compare our SLFV method
 to the EL-RK-FV method by Nakao, Chen, and Qiu
 \cite{nakao2022eulerian}.

\begin{table}
  \centering
  \caption{Errors and convergence rates of 
    the $\kappa$-th order SLFV method
    and the EL-RK-FV method
    by Nakao, Chen, and Qiu \cite{nakao2022eulerian}
    for the test in \Cref{sec:penetr-veloc-bound-1}
    with $\text{CFL} = \frac{k(\max|u| + \max |v|)}{h}$.
  }  
  \label{table:regularPenetrationIncompressibleZeroSourceTermRigigRotationSmallCFL}  
  \begin{tabular}{c|ccccccccc}
    \hline
    CFL &&  $h=\frac{1}{100}$ & rate & $h=\frac{1}{200}$ & rate & 
            $h=\frac{1}{300}$ & rate & $h=\frac{1}{400}$\\ 
    \hline\hline
    \multirow{16}{*}{0.95} & 
    \multicolumn{8}{c}{The EL-RK-FV method
        by Nakao, Chen, and Qiu \cite{nakao2022eulerian}} \\
    \cline{2-9}
    ~ & $L^\infty$ & 
    8.95e-05 & 6.85 & 7.74e-07 & 5.00 & 1.02e-07 & 4.99 & 2.43e-08 \\
    ~ & $L^1$ & 
    1.05e-04 & 6.60 & 1.08e-06 & 5.06 & 1.39e-07 & 4.99 & 3.32e-08 \\
    ~ & $L^2$ & 
    4.66e-05 & 6.25 & 6.13e-07 & 4.99 & 8.12e-08 & 4.99 & 1.93e-08 \\
    \cline{2-9}
    ~ & \multicolumn{8}{c}{Our fourth-order SLFV method} \\    
    \cline{2-9}
    ~ & $L^\infty$ & 
    3.55e-05 & 4.58 & 1.49e-06 & 4.15 & 2.76e-07 & 5.07 & 6.42e-08 \\
    ~ & $L^1$ & 
    1.02e-06 & 4.85 & 3.55e-08 & 4.87 & 4.92e-09 & 4.49 & 1.35e-09 \\
    ~ & $L^2$ & 
    3.46e-06 & 4.79 & 1.25e-07 & 4.91 & 1.71e-08 & 4.57 & 4.58e-09 \\
    \cline{2-9} 
    ~ & \multicolumn{8}{c}{Our sixth-order SLFV method} \\
    \cline{2-9}
    ~ & $L^\infty$ & 
    1.83e-06 & 6.19 & 2.50e-08 & 7.42 & 1.24e-09 & 6.60 & 1.85e-10 \\
    ~ & $L^1$ & 
    3.40e-08 & 6.63 & 3.43e-10 & 7.03 & 1.99e-11 & 6.36 & 3.18e-12 \\
    ~ & $L^2$ & 
    1.22e-07 & 6.54 & 1.31e-09 & 7.28 & 6.83e-11 & 6.29 & 1.12e-11 \\
    \cline{2-9}
    ~ & \multicolumn{8}{c}{Our eighth-order SLFV method} \\
    \cline{2-9}
    ~ & $L^\infty$ & 
    5.60e-08 & 8.50 & 1.55e-10 & 8.65 & 4.63e-12 & 7.95 & 4.70e-13 \\
    ~ & $L^1$ & 
    1.32e-09 & 8.85 & 2.86e-12 & 9.01 & 7.39e-14 & 7.14 & 9.47e-15 \\
    ~ & $L^2$ & 
    4.61e-09 & 8.77 & 1.06e-11 & 8.92 & 2.84e-13 & 7.19 & 3.59e-14 \\
    \hline\hline
    \multirow{16}{*}{8} & 
    \multicolumn{8}{c}{The EL-RK-FV method
        by Nakao, Chen, and Qiu \cite{nakao2022eulerian}} \\
    \cline{2-9}
    ~ & $L^\infty$ & 
    6.82e-05 & 7.16 & 4.77e-07 & 5.03 & 6.19e-08 & 5.00 & 1.47e-08 \\
    ~ & $L^1$ & 
    4.95e-05 & 6.68 & 4.84e-07 & 5.08 & 6.17e-08 & 4.99 & 1.47e-08 \\
    ~ & $L^2$ & 
    2.46e-05 & 6.39 & 2.92e-07 & 4.99 & 3.86e-08 & 4.88 & 9.22e-09 \\
    \cline{2-9} 
    ~ & \multicolumn{8}{c}{Our fourth-order SLFV method} \\    
    \cline{2-9}
    ~ & $L^\infty$ & 
    3.03e-05 & 4.85 & 1.05e-06 & 5.05 & 1.36e-07 & 5.13 & 3.10e-08 \\
    ~ & $L^1$ & 
    4.79e-07 & 4.72 & 1.81e-08 & 4.88 & 2.51e-09 & 5.31 & 5.44e-10 \\
    ~ & $L^2$ & 
    2.03e-06 & 4.77 & 7.42e-08 & 4.87 & 1.03e-08 & 5.23 & 2.29e-09 \\
    \cline{2-9}
    ~ & \multicolumn{8}{c}{Our sixth-order SLFV method} \\
    \cline{2-9}
    ~ & $L^\infty$ & 
    8.66e-07 & 6.88 & 7.33e-09 & 6.93 & 4.41e-10 & 6.82 & 6.20e-11 \\
    ~ & $L^1$ & 
    1.89e-08 & 7.02 & 1.46e-10 & 6.90 & 8.89e-12 & 6.78 & 1.26e-12 \\
    ~ & $L^2$ & 
    7.77e-08 & 6.96 & 6.23e-10 & 6.97 & 3.69e-11 & 6.81 & 5.19e-12 \\
    \cline{2-9} 
    ~ & \multicolumn{8}{c}{Our eighth-order SLFV method} \\
    \cline{2-9}
    ~ & $L^\infty$ & 
    4.67e-08 & 8.75 & 1.09e-10 & 8.85 & 3.00e-12 & 8.31 & 2.75e-13 \\
    ~ & $L^1$ & 
    7.59e-10 & 8.96 & 1.53e-12 & 8.51 & 4.84e-14 & 7.11 & 6.26e-15 \\
    ~ & $L^2$ & 
    3.20e-09 & 8.90 & 6.71e-12 & 8.71 & 1.96e-13 & 8.22 & 1.85e-14 \\
    \hline
  \end{tabular}
\end{table}

Errors and convergence rates of these two methods
 are presented in 
 \cref{table:regularPenetrationIncompressibleZeroSourceTermRigigRotationSmallCFL}, 
 where the fourth-, sixth-, and eighth-order convergence rates of SLFV
 are demonstrated
 for small and large CFL numbers.
While errors of the fourth-order SLFV are close
 to those of EL-RK-FV,
 those of the sixth-order and eighth-order SLFV methods
 are much smaller,
 confirming the advantage of higher-order SLFV methods.
For example,
 on the finest grid, the eighth-order SLFV
 is more accurate than EL-RK-FV
 by five orders of magnitude.
The $L^1$-norm being close to machine precision
 also indicates an excellent conditioning of SLFV.
 
It appears that the extension of EL-RK-FV to the sixth and eighth orders 
 involves splitting techniques that are more complex
 than that of the fourth-order EL-RK-FV. 
In contrast, 
 the extension of the fourth-order SLFV
 to higher orders is as simple as changing values of the input
 parameters; 
 see \cref{thm:accuracyOfSLFV} and its proof.

\begin{table}
  \centering
  \caption{\revise{CPU time (in seconds) of the $\kappa$-th order SLFV method for
    solving the test problem in \Cref{sec:penetr-veloc-bound-1}
    with $\text{CFL} = \frac{k(\max|u| + \max |v|)}{h}$ on an Intel
    Core i7-7500U CPU @ 2.70GHz.}}
  \label{table:SLFVCPU}
  \begin{tabular}{c|c|ccccccccccc}
    \hline
    $\text{CFL}$ & $\kappa$ & $h=\frac{1}{100}$ & rate &
                   $h=\frac{1}{200}$  & rate &
                   $h=\frac{1}{300}$  & rate &
                   $h=\frac{1}{400}$ \\ 
    \hline\hline
    \multirow{3}{*}{0.95} 
    ~ & 4 & 16.77 & 3.00 & 133.96 & 3.00 & 455.94 & 2.98 & 1075.50 \\
    ~ & 6 & 81.14 & 2.99 & 646.28 & 3.00 & 2176.83 & 2.95 & 5091.73 \\ 
    ~ & 8 & 118.39 & 3.01 & 955.75 & 3.00 & 3227.25 & 2.98 & 7600.98 \\
    \hline\hline
    \multirow{3}{*}{8} 
    ~ & 4 & 2.04 & 2.99 & 16.19 & 2.99 & 54.42 & 2.96 & 127.42 \\
    ~ & 6 & 9.66 & 2.99 & 76.72 & 3.00 & 259.04 & 2.97 & 608.44 \\
    ~ & 8 &14.89 & 3.01 & 119.56 & 3.03 & 408.68 & 2.95 & 955.20 \\
    \hline
  \end{tabular}
\end{table}

\revise{
  The CPU time of the SLFV method
  for generating results in 
  \cref{table:regularPenetrationIncompressibleZeroSourceTermRigigRotationSmallCFL}
  is reported in \cref{table:SLFVCPU}, 
  where the increase rate of CPU time is very close to 3, 
  demonstrating the \emph{optimal} complexity of SLFV
  that the CPU time is linearly proportional
  to the number of control volumes at each time step.
  In addition, these rates also imply that
  the expense of complex treatments
  on an $O\left(\frac{1}{h}\right)$ number of cells near the domain boundary
  is dominated by that of simple algorithmic steps
  on an $O\left(\frac{1}{h^2}\right)$ number of cells away from the boundary.
}

\subsection{Transient chaotic mixing
  on a regular penetration domain}
\label{sec:penetr-veloc-bound-4}

\begin{table}
  \centering
  \caption{Errors and convergence rates of 
    the $\kappa$-th order SLFV method 
    for the chaotic mixing test in \Cref{sec:penetr-veloc-bound-4}
    with $T=0.1$ and $k=0.4h$.
    The errors are calculated by Richardson extrapolation.
  }
  \label{table:regularPenetrationIncompressibleZeroSourceTermLargeVortex}
  \begin{tabular}{c|ccccccccc}
    \hline
    $\kappa$ && $\frac{1}{32}$--$\frac{1}{64}$ & rate & 
                $\frac{1}{64}$--$\frac{1}{128}$ & rate & 
                $\frac{1}{128}$--$\frac{1}{256}$ & rate & 
                $\frac{1}{256}$--$\frac{1}{512}$ \\ 
    \hline\hline
    \multirow{3}{*}{4} & $L^\infty$ & 
    3.18e-04 & 3.53 & 2.75e-05 & 3.68 & 2.14e-06 & 3.94 & 1.40e-07 \\
    ~ & $L^1$ & 
    5.24e-05 & 4.01 & 3.26e-06 & 3.90 & 2.18e-07 & 3.97 & 1.39e-08 \\
    ~ & $L^2$ & 
    7.92e-05 & 3.93 & 5.20e-06 & 3.90 & 3.48e-07 & 3.95 & 2.24e-08 \\
    \hline\hline
    \multirow{3}{*}{6} & $L^\infty$ & 
    6.25e-05 & 5.68 & 1.22e-06 & 5.65 & 2.42e-08 & 5.93 & 3.97e-10 \\
    ~ & $L^1$ & 
    9.09e-06 & 5.90 & 1.52e-07 & 6.01 & 2.36e-09 & 5.74 & 4.42e-11 \\
    ~ & $L^2$ & 
    1.42e-05 & 5.86 & 2.43e-07 & 5.97 & 3.89e-09 & 5.87 & 6.67e-11 \\
    \hline
  \end{tabular}
\end{table}

One realworld phenomenon that is 
 challenging to numerical methods for the advection equation
 is the transient chaotic fluid mixing problem,
 of which the solution may evolve to be highly complex 
 even for simple initial conditions and smooth velocity fields.
Such an example is studied 
 in \cite{mathew2007optimal},
 where the domain is a unit box $\Omega = (0,1)^2$,
 the source term in \cref{eq:advection_control_equation}
 is set to $S = 0$,
 the initial condition is $\rho_{\text{init}}(x, y) = \sin(2 \pi y)$, 
 the boundary condition of $\rho$ is periodic, 
 and the velocity field\footnote{
  In \cite{mathew2007optimal}, 
  the second component of $\mathbf{u}_1$ is 
  $-\cos(2 \pi x) \sin(2 \pi y)$,
  of which the minus sign appears to be incorrect
  since the solutions of SLFV with \cref{eq:chaoticMixingVel}
  agree very well with figures in \cite{mathew2007optimal}
  but those with the incorrect minus sign
  differ largely with results in \cite{mathew2007optimal}.
}  is 
\begin{equation}
  \label{eq:chaoticMixingVel}
  \left\{
  \begin{aligned}
  \bu(x,y,t) &= \alpha_1(t) \bu_1(x,y) + \alpha_2(t) \bu_2(x,y), \\
  \bu_1(x,y) &= 
  \begin{pmatrix}
    -\sin (2 \pi x) \cos(2 \pi y)\\
    \cos(2 \pi x) \sin(2 \pi y)
  \end{pmatrix}, \\ 
  \bu_2(x,y) &= 
  \begin{pmatrix}
    -\sin (2 \pi (x-0.25)) \cos(2 \pi (y-0.25))\\
    \cos(2 \pi (x-0.25)) \sin(2 \pi (y-0.25))
  \end{pmatrix}, 
  \end{aligned}\right.
\end{equation}
where the coefficients $\alpha_1(t)$ and $\alpha_2(t)$ 
are cubic splines fitted from characteristic points
of the curves in \cite[Figure 2(b)]{mathew2007optimal}.

Errors and convergence rates of SLFV at $T=0.1$ with $k=0.4h$
 are listed in 
 \Cref{table:regularPenetrationIncompressibleZeroSourceTermLargeVortex}, 
 clearly demonstrating the fourth- and sixth-order convergences rates. 
In \Cref{fig:densityEvolutionoptimalControlOfMixing},
 we present snapshots of the solution at six time instances,  
 of which the patterns and structures
 are visually indistinguishable 
 from those in \cite[Figure 4]{mathew2007optimal}.
 
\begin{figure}[p]  
  \centering
  \subfigure[$t=0$]{
    \includegraphics[width=0.3\textwidth, height=35mm]{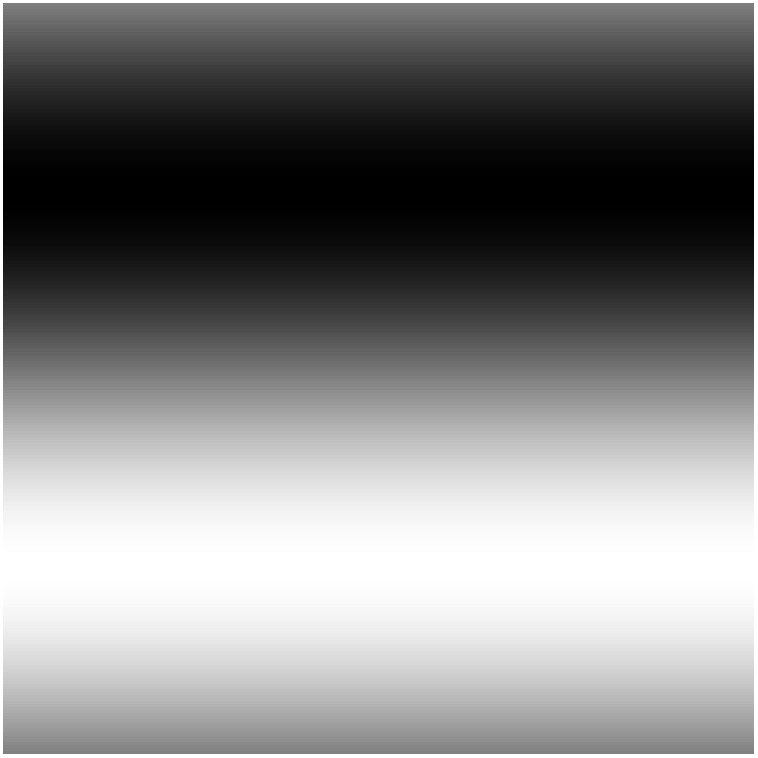}
  }
  \subfigure[$t=0.2$]{
    \includegraphics[width=0.3\textwidth, height=35mm]{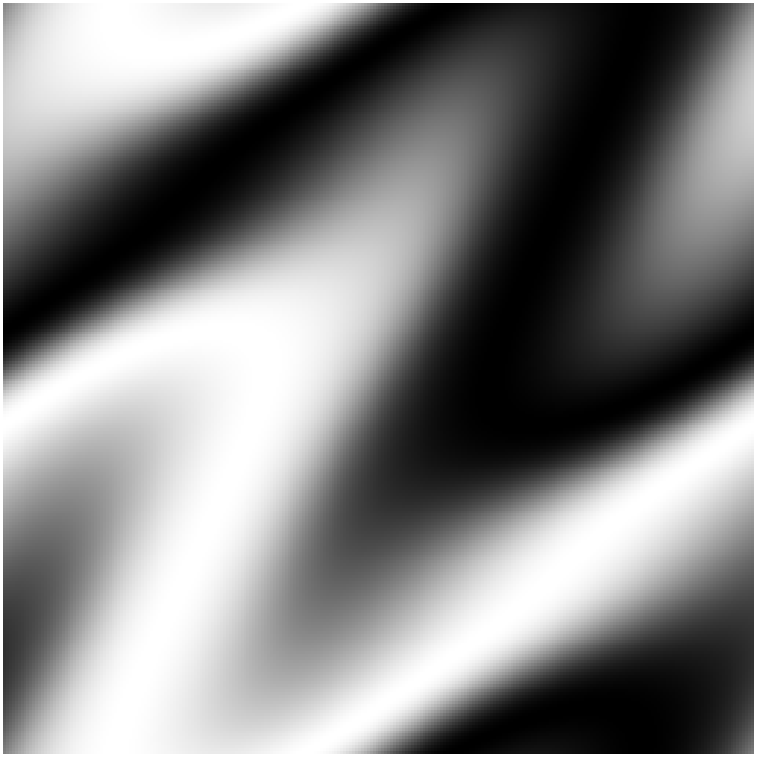}
  }
  \subfigure[$t=0.4$]{
    \includegraphics[width=0.3\textwidth, height=35mm]{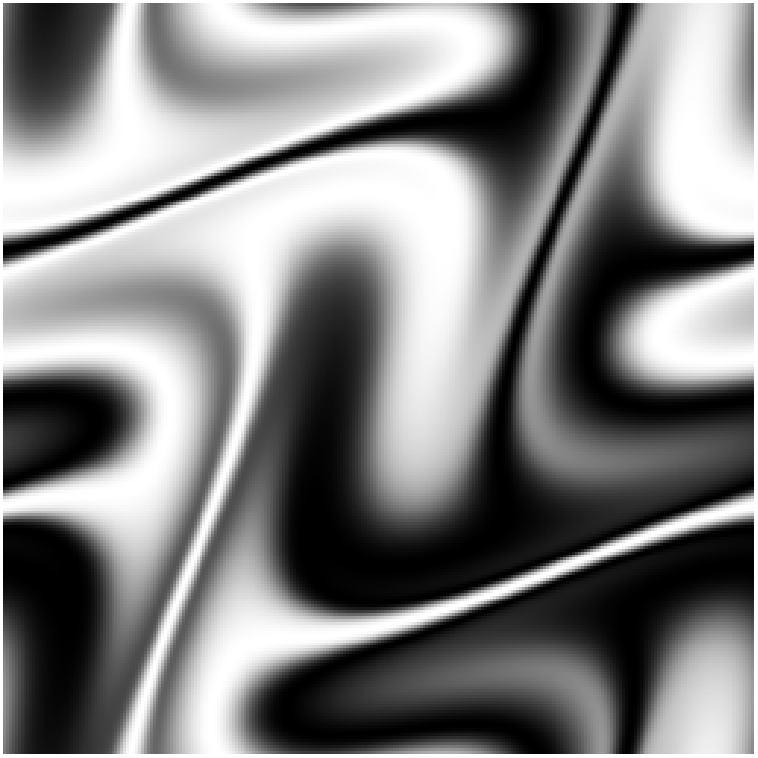}
  } \\
  ~
  \subfigure[$t=0.6$]{
    \includegraphics[width=0.3\textwidth, height=35mm]{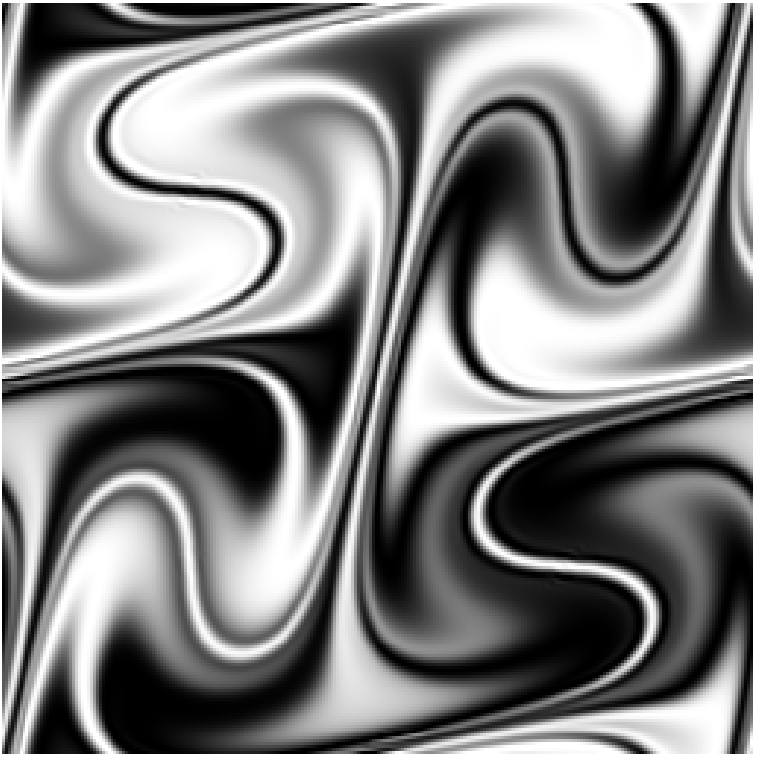}
  }
  \subfigure[$t=0.8$]{
    \includegraphics[width=0.3\textwidth, height=35mm]{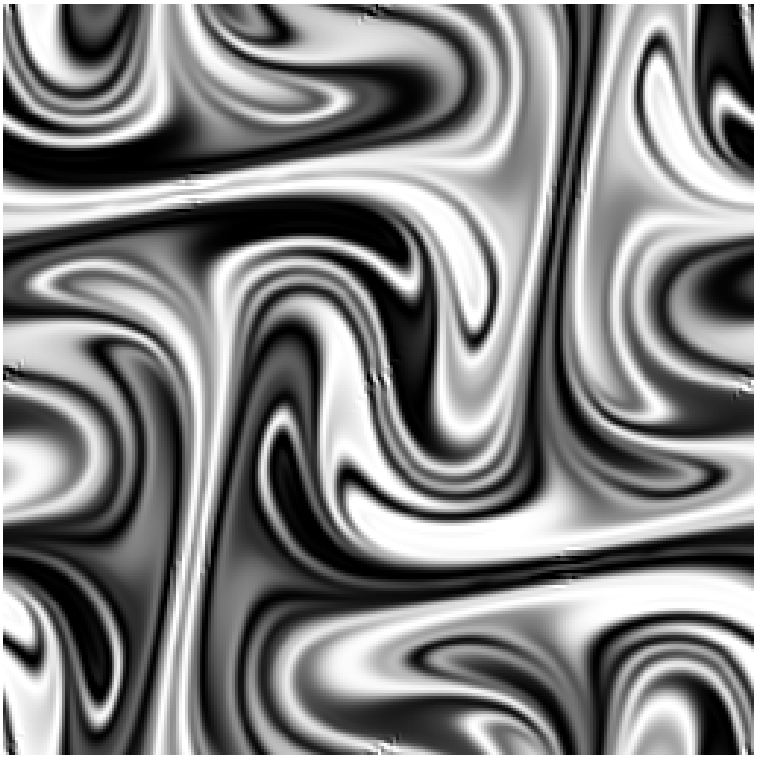}
  }
  \subfigure[$t=1$]{
    \includegraphics[width=0.3\textwidth, height=35mm]{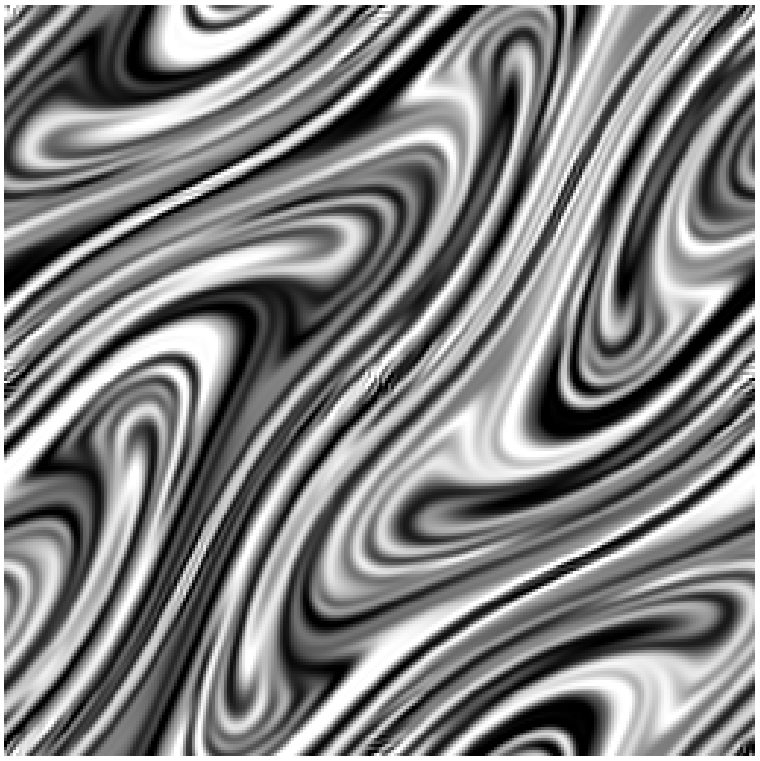}
  }
  \caption{Results of the SLFV method for the chaotic mixing test
    in \Cref{sec:penetr-veloc-bound-4}
    on a regular domain with $h = \frac{1}{256}$. 
    The lightest color corresponds to the value of $1$ 
    and the darkest color corresponds to the value of $-1$.} 
  \label{fig:densityEvolutionoptimalControlOfMixing}
\end{figure}

\subsection{Transient chaotic mixing
  on a complex penetration domain}
\label{sec:penetr-veloc-bound-5}

The test in \Cref{sec:penetr-veloc-bound-4} is
 made even more challenging by changing the domain $\Omega$
 from the unit box
 to a panda adapted from \cite[Figure 10]{zhang2020boolean};
 see \cref{fig:densityEvolutionoptimalControlOfMixingOnPanda}.
The complex topology and geometry present
 serious challenges for Boolean algorithms
 in cutting cells with the irregular boundary $\partial\Omega$
 and for the Newton iteration in intersecting pathlines
 with the domain boundary.
 
While the velocity on $\partial\Omega$
 is still given by \cref{eq:chaoticMixingVel}, 
 the boundary condition of $\rho$ 
 is obtained from the computational results
 on the regular unit box with the same grid size $h$
 via a sufficiently accurate polynomial interpolation.
All other parameters of this test
 are exactly the same as those in \Cref{sec:penetr-veloc-bound-4}.
Consequently, we expect that solutions on the panda
 match very well with those on the unit box,
 as if solutions on the panda were ``cut''
 from those on the regular domain by the shape of the panda.
 
Note that values of $\rho$ are not needed 
 for all points on $\partial\Omega$,
 since the steps of SLFV in \cref{def:SLFV}
 only concern those at the intersections of pathlines to the boundary of panda.
This highlights an advantage of SLFV
 as being worry free about the type 
 of the boundary condition \cref{eq:advection_control_equation_bc}.
 
\begin{figure}[p]
  \centering
  \subfigure[$t=0$]{
    \label{fig:densityEvolutionoptimalControlOfMixingOnPanda_a}
    \includegraphics[width=0.3\textwidth, height=35mm]
    {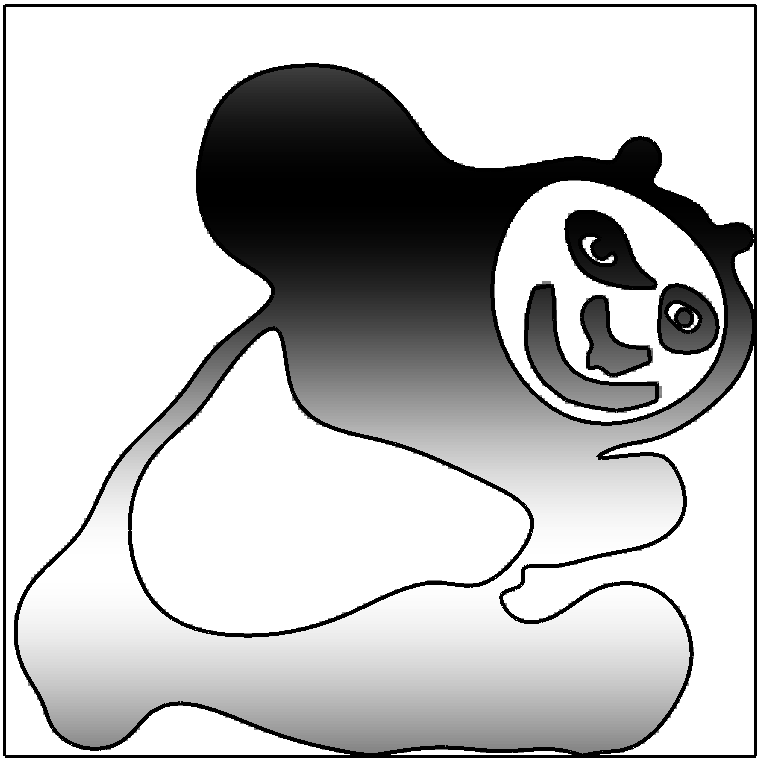}
  }
  \subfigure[$t=0.2$]{
    \includegraphics[width=0.3\textwidth, height=35mm]
    {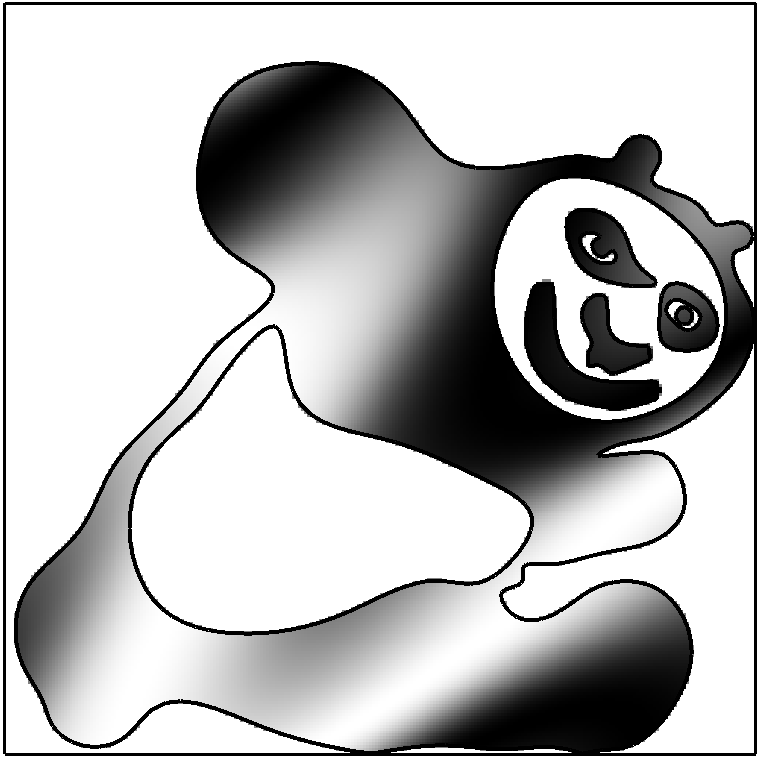}
  }
  \subfigure[$t=0.4$]{
    \includegraphics[width=0.3\textwidth, height=35mm]
    {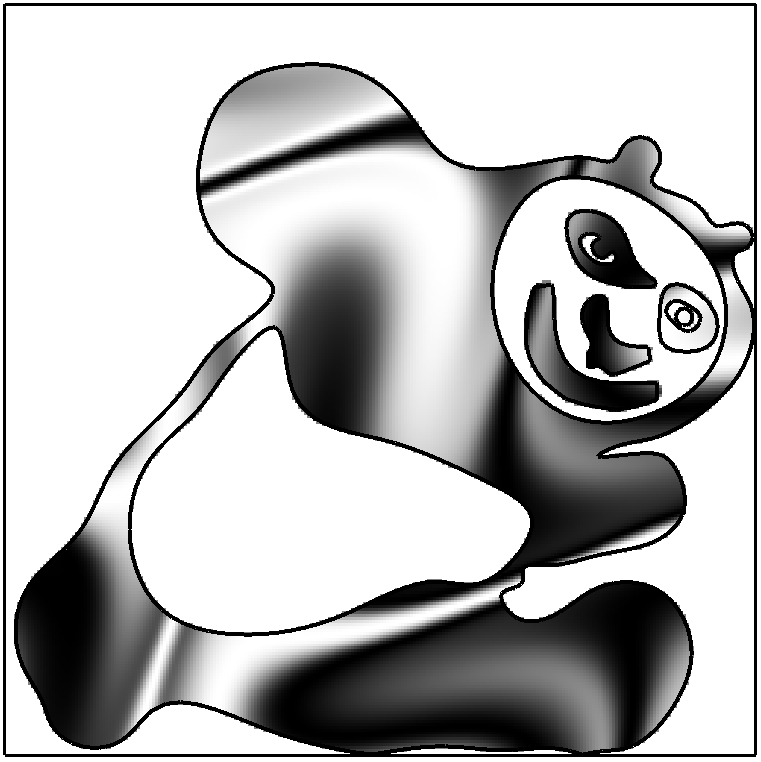}
  } \\
  ~
  \subfigure[$t=0.6$]{
    \includegraphics[width=0.3\textwidth, height=35mm]
    {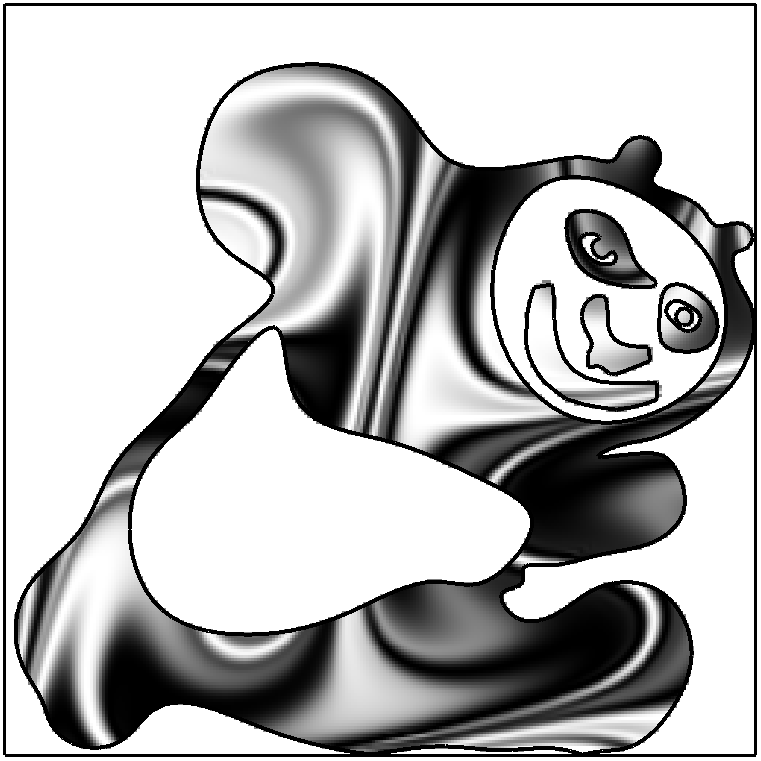}
  }
  \subfigure[$t=0.8$]{
    \includegraphics[width=0.3\textwidth, height=35mm]
    {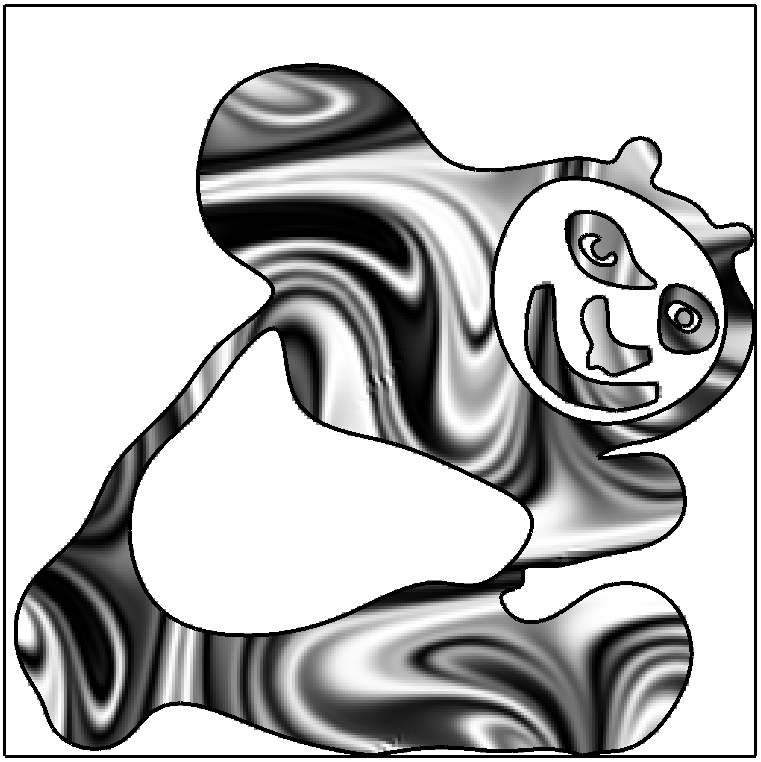}
  }
  \subfigure[$t=1$]{
    \includegraphics[width=0.3\textwidth, height=35mm]
    {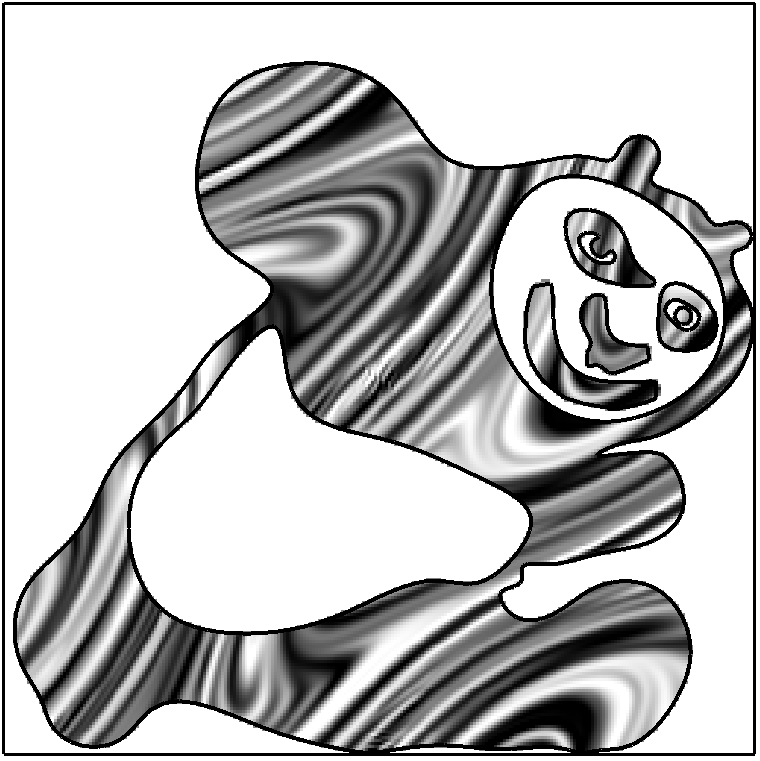}
  }
  \caption{Results of the SLFV method for the chaotic mixing test
    in \Cref{sec:penetr-veloc-bound-5}
    on a complex irregular domain with $h = \frac{1}{256}$. 
    The lightest color corresponds to the value of $1$ 
    and the darkest color corresponds to the value of $-1$.
    In each snapshot, the solution over the panda is supposed to be
    the same as that in the corresponding subplot
    in \Cref{fig:densityEvolutionoptimalControlOfMixing}. 
  } 
\label{fig:densityEvolutionoptimalControlOfMixingOnPanda}
\end{figure}

\begin{table}
  \centering
  \caption{Errors and convergence rates of 
    the $\kappa$-th order SLFV method 
    for the chaotic mixing test in \Cref{sec:penetr-veloc-bound-5}
    with $T=0.1$ and $k=0.4h$.
    The errors are calculated by Richardson extrapolation.
  } 
  \label{table:pandaPenetrationIncompressibleZeroSourceTermLargeVortex}
  \begin{tabular}{c|ccccccccc}
    \hline    
    $\kappa$ && $\frac{1}{256}$--$\frac{1}{512}$ & rate & 
                $\frac{1}{512}$--$\frac{1}{1024}$ & rate & 
                $\frac{1}{1024}$--$\frac{1}{2048}$\\
    \hline\hline
    \multirow{3}{*}{4} & $L^\infty$ & 
    6.14e-07 & 3.70 & 4.73e-08 & 3.98 & 3.00e-09 \\
    ~ & $L^1$ & 
    8.82e-08 & 3.90 & 5.92e-09 & 4.05 & 3.58e-10 \\
    ~ & $L^2$ & 
    1.36e-07 & 3.88 & 9.26e-09 & 4.05 & 5.60e-10 \\
    \hline\hline
    \multirow{3}{*}{6} & $L^\infty$ & 
    9.96e-09 & 5.80 & 1.79e-10 & 6.09 & 2.62e-12 \\
    ~ & $L^1$ & 
    1.26e-09 & 5.75 & 2.34e-11 & 5.92 & 3.88e-13 \\
    ~ & $L^2$ & 
    1.84e-09 & 5.68 & 3.58e-11 & 5.94 & 5.84e-13 \\
    \hline
  \end{tabular}
\end{table}

In \cref{fig:densityEvolutionoptimalControlOfMixingOnPanda}, 
 we present snapshots of the solution
 in the same manner and at the same instants
 with those in \cref{fig:densityEvolutionoptimalControlOfMixing}; 
 any two corresponding subplots
 are visually indistinguishable over the panda region, 
 which indicates an accurate handling
 of the complex domain.
As more quantitative evidences, 
 errors and convergence rates of the solution $\rho$ at $T=0.1$ 
 are presented in 
 \cref{table:pandaPenetrationIncompressibleZeroSourceTermLargeVortex},
 clearly demonstrating
 the fourth- and sixth-order convergence rates of our SLFV method
 over this highly irregular domain.

\section{Conclusion}
\label{sec:conclusion-1}
We have proposed fourth- and higher-order SLFV methods
 for solving the two-dimensional advection equation 
 on both regular and irregular domains
 with periodic and incoming penetration conditions.
The main components of SLFV
 are the Yin sets for representing domains
 with arbitrarily complex topology and geometry,
 Gauss quadrature formulas on quadrilaterals/triangles,
 and a Newton iteration algorithm
 for intersecting a pathline with the domain boundary.
The orthogonality of these components
 furnishes flexibility, user friendliness,
 and the ease of implementation.
Results of various numerical tests demonstrate
 the effectiveness of SLFV,
 its excellent conditioning, 
 and its high-order convergence rates both in time and in space.

To prove the high-order convergence rates of SLFV,
 we have assumed the smoothness of the solution
 of the advection equation.
If the source term in \cref{eq:advection_control_equation_a}
 vanishes and the initial condition
 in \cref{eq:advection_control_equation_init}
 contains discontinuities,
 the high-order convergence rates can be maintained
 by coupling this work to a high-order interface tracking method, 
 such as the MARS method in
 \cite{zhang18:_cubic_mars_method_fourt_sixth}, 
 which tracks the loci of discontinuity
 on a one-dimensional CW-complex. 
We will report this extension in a future paper.

\revise{
  It should be straightforward
  to extend the SLFV method
  to solving the advection equation \cref{eq:advection_control_equation}
  on moving domains.
The key distinction 
lies in the treatment of incoming penetration conditions: 
 instead of computing the intersection point $\mathbf{x}^*$ of 
 the particle pathline with the fixed domain boundary 
 in $\mathbb{R}^2$,
 for moving boundaries
 we determine the space-time intersection point $(\mathbf{x}^*,t^*)$ 
 in the extended phase space $\mathbb{R}^2\times\mathbb{R}$
 and set the value of the corresponding quadrature point
 to be the solution of \cref{eq:advection_control_equation_a}
 whose initial condition $\rho(\mathbf{x}^*,0)$
 is the boundary condition at $(\mathbf{x}^*,t^*)$. 
}
 
We also plan to 
 \revise{extend the SLFV method to the nonlinear advection equation 
 and validate its efficacy through the guiding-center model 
 \cite{crouseilles2010conservative, crouseilles2012discontinuous}. 
Subsequently, we will}
 couple the SLFV method with high-order methods 
 for incompressible Navier-Stokes equations
 \cite{Zhang2016GePUP,Li2024GePUP-E}
 to form a more sophisticated solver, 
 which could be useful in studying physical processes 
 governed by the Boussinesq equations.

\section*{Acknowledgments}
\revise{We are grateful to two anonymous referees 
 for their insightful comments and suggestions.}
We also thank helpful comments from Shuang Hu and Chenhao Ye, 
 graduate students at the school of mathematical sciences
 in Zhejiang University.
 
\bibliographystyle{siamplain}
\bibliography{advectionSLFV}

\end{document}